\theoremstyle{plain}
\newtheorem{thm}{Theorem}[section]
\newtheorem{prop}[thm]{Proposition}
\newtheorem{lemma}[thm]{Lemma}
\newtheorem{cor}[thm]{Corollary}
\theoremstyle{remark}
\theoremstyle{definition}
\newtheorem{rem}[thm]{Remark}
\newtheorem{rems}[thm]{Remarks}
\newtheorem{remdef}[thm]{Remark-Definition}
\newtheorem{remsdefs}[thm]{Remarks-Definitions}
\newtheorem{defi}[thm]{Definition}
\newtheorem{defis}[thm]{Definitions}
\newtheorem{recalls}[thm]{Recalls}
\newtheorem{notas}[thm]{Notations}
\newtheorem{examples}[thm]{Examples}
\title[Some additive decompositions]{Some additive decompositions \\
of $\mathbb{K}$-regular matrices}
\author[A. Dolcetti]{Alberto Dolcetti}
\author[D. Pertici]{Donato Pertici}
\address{Dipartimento di Matematica  e Informatica ``Ulisse Dini''\\Viale Morgagni 67/a\\50134 Firenze, ITALIA}
\email{alberto.dolcetti@unifi.it, \ donato.pertici@unifi.it}
\begin{document}
\parindent 0pt

\selectlanguage{english}

\begin{abstract}
Under suitable hypotheses on the ground field and on the matrix $M$, we discuss existence, uniqueness and properties of some additive decompositions of $M$ and of its image through a convergent series. 
\end{abstract}

\maketitle

\tableofcontents

\renewcommand{\thefootnote}{\fnsymbol{footnote}}
\footnotetext{
This research was partially supported by MIUR-PRIN: ``Variet\`a reali e complesse: geo\-me\-tria, topologia e analisi armonica'' and by GNSAGA-INdAM.}
\renewcommand{\thefootnote}{\arabic{footnote}}
\setcounter{footnote}{0}

{\scshape{Keywords.}} $\mathbb{K}$-horizontal and $\mathbb{K}$-vertical components of a $\mathbb{K}$-regular element; $\mathbb{K}$-regular matrices; projection map over a field; conjugation and involution over a field; splitting bound of a matrix over a field; complete additive Jordan-Chevalley decomposition of a matrix; (normalized) fine Frobenius decomposition and covariants of a matrix;  ordered fields; valued field; ordered quadratically closed fields; real closed fields.

\medskip

{\scshape{Mathematics~Subject~Classification~(2010):}}  15A21, 12F10, 12J10, 12J15, 15A18.

\section*{Introduction}

We work over a fixed field $\mathbb{K}$ and choose a fixed algebraic closure, denoted by $\overline{\mathbb{K}}$.

In \S 1 we consider the set $\mathcal{R}_\mathbb{K}$ of the $\mathbb{K}$-\emph{regular elements}, i.e. the set of the elements of $\overline{\mathbb{K}}$ whose degree on ${\mathbb{K}}$ is not a multiple of the characteristic of $\mathbb{K}$ (Definition \ref{def-K-reg}). If $char(\mathbb{K})=0$, then $\mathcal{R}_\mathbb{K}= \overline{\mathbb{K}}$, while in general it is a subset of separable elements over $\mathbb{K}$ (Remarks \ref{prime-propr-C_K}). 

On $\mathcal{R}_\mathbb{K}$, we can define a $\mathbb{K}$-\emph{projection map} $H_\mathbb{K}$  having image into $\mathbb{K}$ and the related zero-locus $Ker(H_\mathbb{K}) =
\{ \beta \in \mathcal{R}_\mathbb{K} \ / \ H_\mathbb{K}(\beta)=0 \}$ (Definitions \ref{K-proj}).

Analogously to the real-complex case, this map allows to decompose, in a natural and unique way, every element $\lambda \in \mathcal{R}_\mathbb{K}$ as a sum of an element $H_\mathbb{K}(\lambda)$ of $\mathbb{K}$ and of an element $V_\mathbb{K}(\lambda) = \lambda -H_\mathbb{K}(\lambda)$
of $Ker(H_\mathbb{K})$ (Theorem \ref{decomp-traccia}). This decomposition is called $\mathbb{K}$-\emph{decomposition of} $\lambda$ in its $\mathbb{K}$-\emph{horizontal} and $\mathbb{K}$-\emph{vertical components} (Definition \ref{def-hor-ver}).

We pay a particular attention to case of elements of degree $2$ over the field $\mathbb{K}$, supposed to have characteristic different from $2$. In this case the unique conjugate over $\mathbb{K}$ of such an element is the image of its $\mathbb{K}$-\emph{involution} (Remark \ref{involuzione} and Proposition \ref{car-non-2}).

\smallskip

A \emph{complete additive Jordan-Chevalley decomposition over} $\mathbb{K}$ of $M \in M_n(\mathbb{K})$ (Definition \ref{Def-CAJC-Dec}) is an additive decomposition $M = H + V + N$, 
where $H, V, N \in M_n(\mathbb{K})$ are mutually commuting, $H$ is \emph{diagonalizable} over $\mathbb{K}$, $V$ is $\mathbb{K}$-\emph{vertical} and semisimple (i.e. it is semisimple and its eigenvalues are in $Ker(H_\mathbb{K})$) and $N$ is \emph{nilpotent}. 

In \S 2 we prove that a matrix $M \in M_n(\mathbb{K})$ has a uniquely determined complete additive Jordan-Chevalley decomposition over $\mathbb{K}$ provided that $M$ is $\mathbb{K}$-\emph{regular}  (Theorem \ref{Delta-Sigma-N}), i.e. 
all its eigenvalues are in $\mathcal{R}_\mathbb{K}$ or equivalently no irreducible component over $\mathbb{K}$ of its minimal polynomial has degree multiple of the characteristic of $\mathbb{K}$ (Definition \ref{def-matr-CR}).

\smallskip

From \S 3 we focus on nonzero matrices $M \in M_n(\mathbb{K})$ which are semisimple with \emph{splitting bound over} $\mathbb{K}$ at most $2$, i.e every irreducible component over $\mathbb{K}$ of its minimal polynomial of $M$ has degree at most $2$ (Definitions \ref{splitting-bound}). In particular, if $char(\mathbb{K}) \ne 2$, these matrices are $\mathbb{K}$-regular.

Moreover these matrices can be characterized from having a unique \emph{fine Frobenius decomposition} written in terms of certain matrices, called \emph{fine Frobenius covariants} of $M$, which are polynomial functions of $M$ and in terms of the eigenvectors of $M$ together with their $\mathbb{K}$-decompositions (Definition \ref{def-fine-Frob-Dec} and Proposition \ref{dec-fine-unica}). The fine Frobenius decomposition is a key tool for the remaining part of this note and improves the analysis of the \emph{Frobenius decomposition} in \cite{DoPe2017} \S 1.

\smallskip

In \S 4 we assume that the ground field $\mathbb{K}$ has an absolute value and $char(\mathbb{K}) \ne 2$. We consider the image of nonzero matrices in $\mathbb{K}$, which are semisimple with splitting bound at most $2$ over $\mathbb{K}$ throughout a convergent series $f(X)$. The fine Frobenius decomposition of a matrix allows to write its image in terms of the images throughout $f$ of its eigenvalues and of its fine Frobenius covariants (Proposition \ref{decf(M)} and Examples \ref{exp-cos'}).

\smallskip

In \S 5 we collect some properties of \emph{ordered quadratically closed fields} and of \emph{real closed fields} (Definition \ref{defQCO} and Definition \ref{defRC}); in both cases the characteristic is $0$. Similarly to the ordinary real case, every element in the algebraic closure of these fields has a decomposition in a \emph{real part} and in an \emph{imaginary part} (Proposition \ref{parti-Re-Im}). We end the section with some properties of the completion of a real closed valued field (Proposition \ref{completamento-RCVal} and Corollary \ref{cor-dopo-completamento}).

\smallskip

Finally in \S 6 we consider matrices with coefficients in $\mathbb{K}$ supposed to be ordered quadratically closed or supposed to be real closed. This allows to simplify and to specify the fine Frobenius decomposition of a matrix $M$ by a suitable \emph{normalization} of its Frobenius covariants (Remark-Definition \ref{norm-Frob}). When the ground field has an absolute value too, the image of $M$ throughout a convergent series is consequently simplified and specified (Proposition \ref{**}, Remark \ref{caso-reale}). 
As an example we extend the classical Rodrigues' formula, formerly given for the exponential of a real skew symmetric matrix (Examples \ref{exp-cos} and Remark \ref{caso-reale}).
By the way we give some further simple characterizations of real closed fields in terms of $Ker(H_\mathbb{K})$, of the $\mathbb{K}$-involution and of the splitting bound of matrices (Proposition \ref{real-closed-equiv}).

\medskip

\textbf{Acknowledgement.} We want to thank Virgilio Pannone and Orazio Puglisi for many discussions and suggestions about the matter of \S 1 of this paper.

\section{$\mathbb{K}$-regular elements}

\begin{defi}\label{def-K-reg}
We say that an element of $\overline{\mathbb{K}}$ is $\mathbb{K}$-\emph{regular}, if its degree over ${\mathbb{K}}$ is not a multiple of  $char(\mathbb{K})$: the \emph{characteristic} of $\mathbb{K}$. 

We denote by $\mathcal{R}_\mathbb{K}$ the set of these elements.
\end{defi}

\begin{rems}\label{prime-propr-C_K}

a) If $char(\mathbb{K}) = 0$, then $\mathcal{R}_\mathbb{K} = \overline{\mathbb{K}}$.

\smallskip

b) In general $\mathbb{K} \subseteq \mathcal{R}_\mathbb{K}  \subseteq \mathbb{K}^{sep} \subseteq \overline{\mathbb{K}}$, where the $\mathbb{K}^{sep}$ is the \emph{separable closure} of $\mathbb{K}$ in $\overline{\mathbb{K}}$.

In positive characteristic the inclusion $\mathcal{R}_\mathbb{K}  \subseteq \mathbb{K}^{sep}$ follows from the fact that irreducible polynomials in $\mathbb{K}[X]$ of degree not multiple of $char(\mathbb{K})$ are separable (see for instance \cite{Hun1974} V,\,Cor.\,6.14).

\smallskip

c) Note that if $\lambda \in \mathcal{R}_\mathbb{K}$ and $\lambda' \in \mathbb{K}(\lambda)$, then $\lambda' \in \mathcal{R}_\mathbb{K}$, being its degree over $\mathbb{K}$ a divisor of the degree of $\lambda$ over $\mathbb{K}$. Hence: $\mathbb{K}(\lambda) \subseteq \mathcal{R}_\mathbb{K}$ and so 
\begin{center}
$\mathcal{R}_\mathbb{K} = \bigcup_{\lambda \in \mathcal{R}_\mathbb{K}} \mathbb{K}(\lambda).$
\end{center}

d) If $char(\mathbb{K}) = p > 0$, then the inclusion $\mathcal{R}_\mathbb{K} \subseteq \mathbb{K}^{sep}$ is generally strict. 

Indeed, if $a \in \mathbb{K}$, then either the polynomial $f_a(X) = X^p -X -a$ is irreducible and separable over $\mathbb{K}$ or it has $p$ distinct roots in $\mathbb{K}$ (see for instance \cite{Lang2002} VI,\,6,\, Thm.\,6.4). 

When $\mathbb{K} = \mathbb{F}_{p^m}$ (the finite field with $p^m$ elements), by reasons of cardinality, there exists an element $a \in \mathbb{F}_{p^m}$ such that $f_a(X)$ is irreducible and separable over $\mathbb{F}_{p^m}$. So its roots are in  $\mathbb{F}_{p^m}^{sep}$, but not in $\mathcal{R}_{\mathbb{F}_{p^m}}$.

\smallskip

e) Assume that $char(\mathbb{K}) = p > 0$ and let $\lambda_1, \cdots \lambda_l \in \mathcal{R}_\mathbb{K}$ with degrees over $\mathbb{K}$ respectively $d_1, \cdots,  d_l$ such that $\dfrac{d_1 \cdots d_l}{lcm(d_1, \cdots, d_l)} < p$. 
Then $\mathbb{K}(\lambda_1, \cdots \lambda_l) \subseteq \mathcal{R}_\mathbb{K}$.

Indeed $[\mathbb{K}(\lambda_1, \cdots \lambda_l) : \mathbb{K}] \le d_1 \cdots d_l$ and every $d_i$  divides $[\mathbb{K}(\lambda_1, \cdots \lambda_l) : \mathbb{K}]$.  
So $lcm(d_1, \cdots, d_l)$ divides $[\mathbb{K}(\lambda_1, \cdots \lambda_l) : \mathbb{K}]$, therefore 

$[\mathbb{K}(\lambda_1, \cdots \lambda_l) : \mathbb{K}] = lcm(d_1, \cdots, d_l) \, h $ where $h$ is a positive integer at most equal to $\dfrac{d_1 \cdots d_l}{lcm(d_1, \cdots, d_l)}$. Note that $p$ does not divides any $d_i$'s, so it does not divide $lcm(d_1, \cdots, d_l)$. From $p >\dfrac{d_1 \cdots d_l}{lcm(d_1, \cdots, d_l)} \ge h$, we get that $p$ does not divide $[\mathbb{K}(\lambda_1, \cdots \lambda_l) : \mathbb{K}]$. 
This suffices to conclude that $\mathbb{K}(\lambda_1, \cdots \lambda_l) \subseteq \mathcal{R}_\mathbb{K}$.

Note that in case of $l=2$, we have $\dfrac{d_1 \, d_2}{lcm(d_1, d_2)} = gcd(d_1, d_2)$.

\smallskip

f) Recall that two elements of $\lambda, \lambda' \in \overline{\mathbb{K}}$ are said to be \emph{conjugated over} $\mathbb{K}$, if they have the same (monic) minimal polynomial over $\mathbb{K}$. 

It is trivial to get that $\lambda \in \mathcal{R}_\mathbb{K}$ if and only if $\lambda' \in \mathcal{R}_\mathbb{K}$.
\end{rems}

\begin{rem}\label{traccia}
Let $\mathbb{L}$ be any finite extension of $\mathbb{K}$, contained in $\overline{\mathbb{K}}$ and 
$\lambda \in \mathbb{L}$ be of degree $d$ over $\mathbb{K}$ with minimal polynomial over $\mathbb{K}$: 

$f(X) = X^d + a_{d-1}X^{d-1} + \cdots + a_1X + a_0$.

The \emph{trace of} $\lambda$ \emph{relative to the extension} $\mathbb{L}/ \mathbb{K}$,  is the $\mathbb{K}$-linear mapping from $\mathbb{L}$ into $\mathbb{K}$ defined by
\begin{center}
$T_{\mathbb{L}/ \mathbb{K}}(\lambda) = -[\mathbb{L} : \mathbb{K}(\lambda)] a_{d-1}$
\end{center}
(see for instance \cite{Lang2002} VI.5 and \cite{Hun1974} V,\,Thm.\,7.3).
\end{rem}

\begin{defis}\label{K-proj}
Let $\lambda \in \mathcal{R}_\mathbb{K}$ and $f(X) = X^d + a_{d-1}X^{d-1} + \cdots + a_1X + a_0$ be its minimal polynomial over $\mathbb{K}$ (so it is possible to divide by $d$ in $\mathbb{K}$).

\smallskip

a) We define the $\mathbb{K}$-\emph{projection} $H_\mathbb{K}:  \mathcal{R}_\mathbb{K} \to \mathbb{K}$ by 
\begin{center}
$H_\mathbb{K}(\lambda)= \dfrac{T_{\mathbb{K}(\lambda) / \mathbb{K}}(\lambda)}{[\mathbb{K}(\lambda) : \mathbb{K}]} = - \dfrac{a_{d-1}}{d}.$
\end{center}

\smallskip

b) We set 
$Ker(H_\mathbb{K})  =
\{ \lambda \in \mathcal{R}_\mathbb{K} \ / \ H_\mathbb{K}(\lambda)=0 \}$.

\smallskip

c) Finally we call $\mathbb{K}$\emph{-norm of} $\lambda$ the element of $\mathbb{K}$ given by
$
N_{\mathbb{K}}(\lambda) = (-1)^d a_0.
$
\end{defis}

\begin{rems}\label{propr-D-K}
a) From the definition we get that the restriction of $H_\mathbb{K}$ on $\mathbb{K}$ is the identity map on $\mathbb{K}$, so $Ker(H_{\mathbb{K}}) \cap \mathbb{K}= \{0\}$.

\smallskip

b) Elements of $\mathcal{R}_\mathbb{K}$, conjugated over $\mathbb{K}$, have the same $\mathbb{K}$-projection and the same $\mathbb{K}$-norm. 
Therefore, if $Ker(H_\mathbb{K})$ contains an element $\lambda$, then it contains the whole conjugacy class of $\lambda$ over $\mathbb{K}$.

\smallskip

c) If $\lambda \in \mathcal{R}_{\mathbb{K}}$ and $\mathbb{L}$ is a finite extension of $\mathbb{K}$ containing $\lambda$ and such that $[\mathbb{L} : \mathbb{K}]$ is not divisible by $char(\mathbb{K})$, then $H_\mathbb{K}(\lambda) = \dfrac{T_{\mathbb{L} / \mathbb{K}} (\lambda)}{[\mathbb{L} : \mathbb{K}]}$.

Indeed, remembering \ref{traccia} and in \ref{K-proj}, we get:
\begin{center}
$H_\mathbb{K}(\lambda) = - \dfrac{a_{d-1}}{d}= \dfrac{T_{\mathbb{L}/\mathbb{K}}(\lambda)}{[\mathbb{L}:\mathbb{K}(\lambda)]\cdot[\mathbb{K}(\lambda): \mathbb{K}]} = \dfrac{T_{\mathbb{L}/\mathbb{K}}(\lambda)}{[\mathbb{L}:\mathbb{K}]}$.
\end{center}

\smallskip

d) If $\mathbb{M}$ is an extension of $\mathbb{K}$, contained in $\mathcal{R}_\mathbb{K}$, then the restriction 
$H_{\mathbb{K}}\vert_{\mathbb{M}}: \mathbb{M} \to {\mathbb{K}}$ is ${\mathbb{K}}$-linear.

In particular if $\mathcal{R}_{\mathbb{K}} = \overline{\mathbb{K}}$ (for instance if $char(\mathbb{K})=0$), then $Ker(H_{\mathbb{K}})$ is a $\mathbb{K}$-vector subspace of $\overline{\mathbb{K}}$.

For, let $\lambda_1, \lambda_2 \in \mathbb{M}$ and $k_1, k_2 \in \mathbb{K}$. Then $\mathbb{L}= \mathbb{K}(\lambda_1, \lambda_2) \subseteq \mathbb{M}\subseteq \mathcal{R}_\mathbb{K} \subseteq \mathbb{K}^{sep}$, therefore $\mathbb{L}$ is a finite separable extension of $\mathbb{K}$.
By primitive element theorem, there exists $\gamma \in \mathbb{L} \subseteq \mathcal{R}_\mathbb{K}$, such that $\mathbb{L} = \mathbb{K}(\gamma)$. Since $\gamma \in \mathcal{R}_\mathbb{K}$, $[\mathbb{L}: \mathbb{K}]$ is not divisible by $p$.
Hence by the remark (c) above, the restriction of $H_{\mathbb{K}}$ on ${\mathbb{L}}$ is $\mathbb{K}$-linear and so $H_{\mathbb{K}}(k_1\lambda_1 + k_2 \lambda_2) = k_1H_{\mathbb{K}}(\lambda_1) + k_2H_{\mathbb{K}}(\lambda_2)$. Hence $H_{\mathbb{K}}$ is $\mathbb{K}$-linear on $\mathbb{M}$.

\smallskip

e) Assume that $char(\mathbb{K}) = p > 0$ and let $\lambda_1, \cdots , \lambda_l \in Ker(H_\mathbb{K})$ with degrees over $\mathbb{K}$ respectively $d_1, \cdots, d_l$ such that $\dfrac{d_1 \cdots d_l}{lcm(d_1, \cdots, d_l)} < p$. 

Then the $\mathbb{K}$-vector space generated by $\lambda_1, \cdots , \lambda_l$ is contained in $Ker(H_\mathbb{K})$.

Indeed by \ref{prime-propr-C_K} (e), $\mathbb{K} \subseteq \mathbb{K}(\lambda_1, \cdots , \lambda_l) \subseteq \mathcal{R}_\mathbb{K}$ and, by (d) above, $H_{\mathbb{K}}$ is $\mathbb{K}$-linear on  $\mathbb{K}(\lambda_1, \cdots , \lambda_l)$, therefore every $\mathbb{K}$-linear combination of $\lambda_1, \cdots , \lambda_l$ belongs to $Ker(H_\mathbb{K})$.
\end{rems}

\begin{thm}\label{decomp-traccia}
Let $\lambda \in \mathcal{R}_\mathbb{K}$ and set $V_\mathbb{K}(\lambda) = \lambda -H_\mathbb{K}(\lambda)$. 

Then  $\lambda = H_\mathbb{K}(\lambda) + V_\mathbb{K}(\lambda)$ is the unique way to write $\lambda$ as sum of an element $H_\mathbb{K}(\lambda)$ of $\mathbb{K}$ and of an element $V_\mathbb{K}(\lambda)$ of $Ker(H_\mathbb{K})$.
\end{thm}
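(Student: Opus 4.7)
My plan is to split the statement into existence and uniqueness, and to observe that everything takes place inside the single field $\mathbb{K}(\lambda)$, where the machinery of the previous remarks gives full $\mathbb{K}$-linearity of $H_\mathbb{K}$.

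For existence, I need only check that $V_\mathbb{K}(\lambda) = \lambda - H_\mathbb{K}(\lambda)$ belongs to $Ker(H_\mathbb{K})$. First, Remark \ref{prime-propr-C_K}(c) gives $\mathbb{K}(\lambda) \subseteq \mathcal{R}_\mathbb{K}$, so $V_\mathbb{K}(\lambda) \in \mathbb{K}(\lambda) \subseteq \mathcal{R}_\mathbb{K}$ and $H_\mathbb{K}(V_\mathbb{K}(\lambda))$ is actually defined. Next, Remark \ref{propr-D-K}(d) applied to $\mathbb{M} = \mathbb{K}(\lambda)$ tells us that $H_\mathbb{K}$ restricted to $\mathbb{K}(\lambda)$ is $\mathbb{K}$-linear, and Remark \ref{propr-D-K}(a) says it fixes $\mathbb{K}$ pointwise. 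Combining these,
\[
H_\mathbb{K}(V_\mathbb{K}(\lambda)) \;=\; H_\mathbb{K}(\lambda) - H_\mathbb{K}(\lambda)\cdot H_\mathbb{K}(1) \;=\; H_\mathbb{K}(\lambda) - H_\mathbb{K}(\lambda) \;=\; 0.
\]

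For uniqueness, suppose $\lambda = k + v$ with $k \in \mathbb{K}$ and $v \in Ker(H_\mathbb{K})$. Then $v = \lambda - k \in \mathbb{K}(\lambda)$, so both $\lambda$ and $v$ lie in $\mathbb{K}(\lambda)$, and we may apply the $\mathbb{K}$-linearity of $H_\mathbb{K}|_{\mathbb{K}(\lambda)}$ to $\lambda = k + v$ to obtain $H_\mathbb{K}(\lambda) = k + H_\mathbb{K}(v) = k$. Consequently $v = \lambda - k = V_\mathbb{K}(\lambda)$, and the decomposition is unique.

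There is no real obstacle here; the content of the theorem is essentially a repackaging of Remarks \ref{prime-propr-C_K}(c) and \ref{propr-D-K}(a),(d). The one conceptual point worth highlighting is that in positive characteristic $\mathcal{R}_\mathbb{K}$ need not be closed under addition, so one cannot hope to argue globally; however, for any fixed $\lambda \in \mathcal{R}_\mathbb{K}$ the entire field $\mathbb{K}(\lambda)$ sits inside $\mathcal{R}_\mathbb{K}$, and this is exactly the ambient space in which the whole argument lives.
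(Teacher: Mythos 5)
Your proof is correct and takes essentially the same route as the paper: both arguments live inside $\mathbb{K}(\lambda)$, which is contained in $\mathcal{R}_\mathbb{K}$ by Remark \ref{prime-propr-C_K}(c), and both rest on the $\mathbb{K}$-linearity of $H_\mathbb{K}$ restricted to that field (the paper invokes it via the trace formula in Remark \ref{propr-D-K}(c), you invoke the packaged version in \ref{propr-D-K}(d), which is proved from (c)). The only cosmetic difference is that the paper verifies $T_{\mathbb{K}(\lambda)/\mathbb{K}}(\beta)=0$ by a direct computation, whereas you get the same conclusion from linearity together with $H_\mathbb{K}|_\mathbb{K}=\mathrm{id}$.
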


\begin{proof}
Indeed we can write $\lambda = \alpha + \beta$ with $\alpha = H_\mathbb{K}(\lambda) \in \mathbb{K}$ and $ \beta = V_\mathbb{K}(\lambda) = \lambda -H_\mathbb{K}(\lambda)$, hence $\mathbb{K}(\beta)= \mathbb{K}(\lambda)= \mathbb{L}$ and so $\lambda$ and $\beta$ have the same degree, say $d$, over $\mathbb{K}$, hence $\beta \in \mathcal{R}_\mathbb{K}$. Now $T_{\mathbb{L} / {\mathbb{K}}} (\beta) =  T_{\mathbb{L} / {\mathbb{K}}} (\lambda) -  T_{\mathbb{L} / {\mathbb{K}}} (\alpha) = d \alpha - [\mathbb{L}: \mathbb{K}] \, \alpha =0$. Hence, by \ref{propr-D-K} (c), $\beta \in Ker(H_{\mathbb{K}})$.

For the uniqueness, let $\lambda = \alpha' + \beta'$ be another decomposition of the same type. As above $\beta' \in \mathbb{K}(\lambda) = \mathbb{L}$. By  \ref{propr-D-K} (c), 
$\alpha= H_{\mathbb{K}}(\lambda) = H_{\mathbb{K}}(\alpha')  + H_{\mathbb{K}}(\beta') =  \alpha'$. From this $\beta = \beta'$ too and so we can conclude.
\end{proof}

\begin{defi}\label{def-hor-ver}
We refer to the decomposition in \ref{decomp-traccia} as \emph{the} $\mathbb{K}$-\emph{decomposition of} $\lambda \in \mathcal{R}_\mathbb{K}$,  to $H_\mathbb{K}(\lambda) \in \mathbb{K}$ as the $\mathbb{K}$\emph{-horizontal} \emph{component} of $\lambda$ and to $V_\mathbb{K}(\lambda)\in Ker(H_\mathbb{K})$ as the $\mathbb{K}$\emph{-vertical} \emph{component} of $\lambda$.
\end{defi}

\begin{rem}\label{car0sommadiretta}
If $\mathcal{R}_\mathbb{K} = \overline{\mathbb{K}}$, then $\overline{\mathbb{K}}$ is direct sum of the $\mathbb{K}$-subspaces $\mathbb{K}$ and $Ker(H_\mathbb{K})$: $\overline{\mathbb{K}} = \mathbb{K} \oplus	Ker(H_\mathbb{K})$.
\end{rem}

\begin{prop}\label{caratt-coniugati}
Two elements of $\mathcal{R}_\mathbb{K}$  are conjugated  over $\mathbb{K}$ if and only if they have the same $\mathbb{K}$-horizontal component and their $\mathbb{K}$-vertical components are conjugated over $\mathbb{K}$.
\end{prop}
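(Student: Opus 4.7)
The proof will rest on one simple observation: translation by an element $h\in\mathbb{K}$ is a bijection of $\overline{\mathbb{K}}$ that preserves conjugacy classes over $\mathbb{K}$. Indeed, if $\mu\in\overline{\mathbb{K}}$ has minimal polynomial $g(X)\in\mathbb{K}[X]$, then $g(X-h)\in\mathbb{K}[X]$ is monic, irreducible (its irreducibility is preserved by the $\mathbb{K}$-algebra automorphism $X\mapsto X-h$ of $\mathbb{K}[X]$) and annihilates $\mu+h$; so it is the minimal polynomial of $\mu+h$. Hence $\mu$ and $\mu'$ are $\mathbb{K}$-conjugate if and only if $\mu+h$ and $\mu'+h$ are $\mathbb{K}$-conjugate.

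For the forward implication, let $\lambda, \lambda'\in \mathcal{R}_\mathbb{K}$ be conjugate over $\mathbb{K}$. By Remarks \ref{propr-D-K}(b) they share the same $\mathbb{K}$-horizontal component $h:=H_\mathbb{K}(\lambda)=H_\mathbb{K}(\lambda')$. Then $V_\mathbb{K}(\lambda)=\lambda-h$ and $V_\mathbb{K}(\lambda')=\lambda'-h$, and the translation principle applied with $-h$ shows that these two vertical components are conjugate over $\mathbb{K}$.

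For the converse, suppose $h:=H_\mathbb{K}(\lambda)=H_\mathbb{K}(\lambda')\in\mathbb{K}$ and that $V_\mathbb{K}(\lambda)$ and $V_\mathbb{K}(\lambda')$ are conjugate over $\mathbb{K}$. By Theorem \ref{decomp-traccia} we have $\lambda=h+V_\mathbb{K}(\lambda)$ and $\lambda'=h+V_\mathbb{K}(\lambda')$, and translating back by $+h$ yields the $\mathbb{K}$-conjugacy of $\lambda$ and $\lambda'$.

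There is no serious obstacle here; the entire content of the proposition is the translation-invariance of minimal polynomials under shifts in $\mathbb{K}$, combined with the already established facts that conjugate elements share their $\mathbb{K}$-horizontal component (Remarks \ref{propr-D-K}(b)) and that the $\mathbb{K}$-decomposition of Theorem \ref{decomp-traccia} is unique. The only thing to watch is that both $V_\mathbb{K}(\lambda)$ and $V_\mathbb{K}(\lambda')$ lie in $\mathcal{R}_\mathbb{K}$, but this is immediate since $\mathbb{K}(V_\mathbb{K}(\lambda))=\mathbb{K}(\lambda)$, so that $V_\mathbb{K}(\lambda)$ has the same degree over $\mathbb{K}$ as $\lambda$ (and analogously for $\lambda'$), so the hypothesis that their vertical components be conjugate is actually meaningful.
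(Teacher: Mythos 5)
Your proof is correct and takes essentially the same approach as the paper: both argue via the translation $X \mapsto X - h$ (or $X \mapsto X + \alpha$) and its effect on minimal polynomials, with the paper carrying out the translation inline (recomputing the shared horizontal component from $-a_{d-1}/d$) while you package it as a general translation-invariance observation and invoke Remarks~\ref{propr-D-K}(b) directly. The paper also verifies explicitly that the degree-$(d-1)$ coefficient of the shifted polynomial vanishes, which you rightly omit as redundant given Theorem~\ref{decomp-traccia}.
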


\begin{proof}
Let $\lambda_1 =\alpha_1+ \beta_1, \lambda_2=\alpha_2+ \beta_2$ be elements of $\mathcal{R}_\mathbb{K}$ with their $\mathbb{K}$-decompositions.

If $\lambda_1, \lambda_2$ are conjugated  over $\mathbb{K}$ and $f(X) = \sum_{i=0}^d a_i X^i$ (with $a_d=1$) is their minimal polynomial, then $ \alpha_1 = H_{\mathbb{K}}(\lambda_1)= -\dfrac{a_{d-1}}{d} = H_{\mathbb{K}}(\lambda_2)= \alpha_2$. 

Denote $\alpha = \alpha_1= \alpha_2$, $\tilde{f}(X) = f(X+ \alpha)$ (which is a monic irreducible polynomial of degree $d$ in $\mathbb{K}[X]$) and $\{ \lambda_h = \alpha + \beta_h \ / \ h = 1, \cdots , d\}$ the conjugacy class of $\lambda_1$ and $\lambda_2$. 

Since $\tilde{f}(\beta_h) = f(\lambda_h) =0$ for every $h$, then $\tilde{f}(X)$ is the (monic) minimal polynomial of $\beta_1$ and $\beta_2$, which are therefore conjugated over $\mathbb{K}$.

Note that the coefficient of the term of degree $d-1$ of $\tilde{f}(X)$ is zero, because it is equal to $-\sum_{i=1}^d \beta_i = -\sum_{i=1}^d \lambda_i+ d\,\alpha = a_{d-1} +d\,\alpha = 0$.

For the converse, assume that $\beta_1, \beta_2$ are conjugated of degree $d$ over $\mathbb{K}$ and let $g(X)$ be their minimal polynomial. Then the polynomial $g(X - \alpha)$ is the minimal polynomial of $\alpha + \beta_1$ and of $\alpha + \beta_2$, which are therefore conjugated over $\mathbb{K}$.
\end{proof}

\begin{remdef}
We call the polynomial $\tilde{f}(X) = f(X - \dfrac{a_{d-1}}{d})$, introduced in \ref{caratt-coniugati},
\emph{the reduced form} of the monic irreducible polynomial 

$f(X) = \sum_{i=0}^{d-1} a_i X^i + X^d \in \mathbb{K}[X]$. 

This polynomial is monic, irreducible, of degree $d$ with coefficient of the term of degree $d-1$ equal to $0$, its roots are $\beta_1, \cdots , \beta_d$: the $\mathbb{K}$-vertical components of the roots of $f(X)$. 
Note that $\tilde{f}(X)$ is obtained by $f(X)$ by means of the translation associated to the $\mathbb{K}$-horizontal component of the roots of $f(X)$.

This transformation has a long and important role in history of theory of algebraic equations and it is the easiest one among the \emph{Tschirnhaus transformations} (see for instance \cite{Gar1927} and also \cite{Tignol2001} Ch.\,VI).
\end{remdef}

\begin{remdef}\label{involuzione} 
It is possible to define a natural involution on $\mathcal{R}_\mathbb{K}$ as follows. Every $\lambda \in \mathcal{R}_\mathbb{K}$ can be uniquely written as $\lambda = \alpha + \beta$ with $\alpha \in \mathbb{K}$ and  $\beta \in Ker(H_\mathbb{K})$. Since also $-\beta$ is in $Ker(H_\mathbb{K})$, we put $\overline{\lambda} = \alpha - \beta$, which is an element of $\mathcal{R}_\mathbb{K}$. 

Note that, if $char(\mathbb{K}) = 2$, then $\lambda = \overline{\lambda}$ for every $\lambda \in \mathcal{R}_\mathbb{K}$, otherwise $\lambda = \overline{\lambda}$ if and only if $\lambda \in \mathbb{K}$.

We call $\overline{\lambda}$ \emph{the} $\mathbb{K}$-\emph{involution of} $\lambda \in \mathcal{R}_\mathbb{K}$. 

It is easy to check that, when $\mathcal{R}_\mathbb{K} = \overline{\mathbb{K}}$ (e. g. in case of characteristic $0$), the $\mathbb{K}$-involution is an automorphism of the $\mathbb{K}$-vector space $\overline{\mathbb{K}}$, whose restriction to $\mathbb{K}$ is the identity.
\end{remdef}

\begin{prop}\label{car-non-2} 
Assume that the characteristic of $\mathbb{K}$ is not $2$.

\smallskip

a) If $\lambda =  \alpha + \beta \in \overline{\mathbb{K}}$ (with $\alpha \in \mathbb{K}$ and $\beta \in Ker(H_\mathbb{K})$) has degree $2$, then the unique conjugate of $\lambda$ is its $\mathbb{K}$-involution $\overline{\lambda} = \alpha - \beta$.

\smallskip

b) If $\theta_1, \cdots , \theta_l \in \overline{\mathbb{K}}$ are algebraic over ${\mathbb{K}}$ of degree at most $2$, then 

$\mathbb{K}(\theta_1, \cdots , \theta_l ) \subseteq \mathcal{R}_\mathbb{K}$ 
and \  $\overline{\theta_1 + \cdots + \theta_l} = \overline{\theta_1}  + \cdots + \overline{\theta_l}$.

\smallskip

c)  If $\lambda \in \overline{\mathbb{K}}$ has degree at most $2$, then $\overline{xy} = \overline{x} \, \overline{y}$ for every $x, y \in \mathbb{K}(\lambda)$.

\smallskip

d) If $\beta, \beta' \in Ker(H_\mathbb{K})$ have degree $2$ over $\mathbb{K}$, then $\beta \beta' \in \mathbb{K}$ if and only if they are linearly dependent over $\mathbb{K}$, otherwise $\beta \beta'$ has degree $2$ over $\mathbb{K}$ and it belongs to $Ker(H_\mathbb{K})$.

\smallskip

e) Let $\beta \in \overline{\mathbb{K}}$ has degree $2$ over $\mathbb{K}$, then $\beta \in Ker(H_\mathbb{K})$ if and only if $\beta \notin \mathbb{K}$ but $\beta^2 \in \mathbb{K}$.
\end{prop}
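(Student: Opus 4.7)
The plan is to establish (e) first, since the characterization it provides---degree-$2$ elements of $Ker(H_\mathbb{K})$ are exactly those $\beta \notin \mathbb{K}$ with $\beta^2 \in \mathbb{K}$---drives parts (a), (c), and (d). For (e), if $\beta \in Ker(H_\mathbb{K})$ has degree $2$ with monic minimal polynomial $X^2 + a_1 X + a_0$, then by Definitions \ref{K-proj} we have $H_\mathbb{K}(\beta) = -a_1/2 = 0$ (invoking $char(\mathbb{K}) \ne 2$), so $a_1 = 0$ and $\beta^2 = -a_0 \in \mathbb{K}$; also $\beta \notin \mathbb{K}$ by Remark \ref{propr-D-K}(a). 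Conversely, if $\beta \notin \mathbb{K}$ has degree $2$ with $\beta^2 \in \mathbb{K}$, then $X^2 - \beta^2$ must be its minimal polynomial, whose linear coefficient vanishes, forcing $H_\mathbb{K}(\beta) = 0$.

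For (a), Proposition \ref{caratt-coniugati} forces any conjugate of $\lambda$ to share the horizontal part $\alpha$ and to have a vertical component conjugate to $\beta$; by (e) the minimal polynomial of $\beta$ is $X^2 - \beta^2$ with roots $\pm\beta$, so the unique conjugate of $\lambda$ is $\alpha - \beta = \overline{\lambda}$. For (b), a tower-law estimate $[\mathbb{K}(\theta_1, \ldots, \theta_l):\mathbb{K}] \mid d_1 \cdots d_l \le 2^l$ forces this degree to be a power of $2$, hence not divisible by $char(\mathbb{K}) \ne 2$, placing $\mathbb{K}(\theta_1, \ldots, \theta_l)$ inside $\mathcal{R}_\mathbb{K}$. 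Remark \ref{propr-D-K}(d) then makes $H_\mathbb{K}$ $\mathbb{K}$-linear on this extension, and since $\overline{\mu} = 2 H_\mathbb{K}(\mu) - \mu$ by construction, the involution is $\mathbb{K}$-linear too, yielding the additivity statement.

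For (c) and (d) I would compute directly. In (c), $\deg \lambda \le 1$ is trivial; otherwise $\beta := V_\mathbb{K}(\lambda)$ has degree $2$ in $Ker(H_\mathbb{K})$, so by (e) $\{1, \beta\}$ is a $\mathbb{K}$-basis of $\mathbb{K}(\lambda)$ with $\beta^2 \in \mathbb{K}$, and expanding $x = a + b\beta$, $y = c + d\beta$ shows both $xy$ and $\overline{x}\,\overline{y}$ equal $ac + bd\beta^2 - (ad+bc)\beta$. In (d), part (b) gives $\mathbb{K}(\beta, \beta') \subseteq \mathcal{R}_\mathbb{K}$; if $\beta' = c\beta$ then $\beta\beta' = c\beta^2 \in \mathbb{K}$, and conversely $\beta\beta' = k \in \mathbb{K}$ yields $\beta' = (k/\beta^2)\beta$, so $\mathbb{K}$-linear dependence is equivalent to $\beta\beta' \in \mathbb{K}$; if linearly independent then $\beta\beta' \notin \mathbb{K}$ while $(\beta\beta')^2 = \beta^2(\beta')^2 \in \mathbb{K}$, and (e) places $\beta\beta'$ in $Ker(H_\mathbb{K})$ with degree $2$. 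No step is a real obstacle; the only point requiring attention is the inclusion in (b), where one must observe that iterated quadratic extensions produce degrees that are powers of $2$, which is then shielded from $char(\mathbb{K})$ by the hypothesis $char(\mathbb{K}) \ne 2$.
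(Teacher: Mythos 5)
Your proof is correct and rests on the same underlying facts as the paper's, but with a cleaner logical organization: you single out part~(e) as the anchor and derive (a), (c), and (d) from it, whereas the paper proves (a) via the reduced form $\tilde f(X)=X^2+N_{\mathbb K}(\beta)$ of the minimal polynomial, handles (c) and (d) by direct computation, and only then records (e) as an afterthought. Your reordering buys a small amount of economy: in (d), once you know $(\beta\beta')^2\in\mathbb K$ and $\beta\beta'\notin\mathbb K$, part~(e) immediately hands you membership in $Ker(H_{\mathbb K})$, where the paper instead re-derives the minimal polynomial $X^2-\beta^2\beta'^2$. Likewise in (b), your identity $\overline{\mu}=2H_{\mathbb K}(\mu)-\mu$ makes the additivity of the involution a one-line corollary of the $\mathbb K$-linearity of $H_{\mathbb K}$ from Remark~\ref{propr-D-K}(d), which is slightly slicker than the paper's explicit verification that $\sum_i\beta_i\in Ker(H_{\mathbb K})$. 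Two small points worth tightening: in (b), the degree $[\mathbb K(\theta_1,\dots,\theta_l):\mathbb K]$ need not divide $d_1\cdots d_l$ in general; the correct tower-law statement is that it is a \emph{product} of divisors of the $d_i$, hence a power of $2$, which is all you need. And in (d), before invoking (e) you should say explicitly that $\beta\beta'\notin\mathbb K$ together with $(\beta\beta')^2\in\mathbb K$ forces the minimal polynomial to be $X^2-\beta^2\beta'^2$, so that $\beta\beta'$ has degree exactly $2$ and (e) applies.
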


\begin{proof}
Let $\lambda = \alpha + \beta$ be as in (a) and let $f(X)$ be its monic minimal polynomial. Then the reduced form of $f(X)$ is  $\tilde{f}(X) = X^2 + N_{\mathbb{K}}(\beta)$. Since $\tilde{f}(X)$  has $\beta$ as root, also $- \beta$ is a root of $\tilde{f}(X)$ and so the roots of $\tilde{f}(X)$ are $\beta$ and $-\beta$ and $\lambda$ and $\overline{\lambda}$ are conjugated over $\mathbb{K}$ (remember \ref{caratt-coniugati}).

In order to prove (b), we have that the degree of $\mathbb{K}(\theta_1, \cdots , \theta_l)$  over  $\mathbb{K}$ is a power of $2$ and so for the degree of any its element too, therefore $\mathbb{K}(\theta_1, \cdots , \theta_l ) \subseteq \mathcal{R}_\mathbb{K}$.

We denote by $\theta_i = \alpha_i + \beta_i$ the $\mathbb{K}$-decomposition of $\theta_i$. 

By \ref{propr-D-K} (d) (with $\mathbb{M} =\mathbb{K}(\theta_1, \cdots , \theta_l ))$, we get: $\sum_i \beta_i \in  Ker(H_\mathbb{K})$. 

Hence  $\sum_i \alpha_i + \sum_i \beta_i$ is the the $\mathbb{K}$-decomposition of $\sum_i \theta_i$ and so:  

$\overline{\theta_1 + \cdots + \theta_l} =  \sum_i \alpha_i - \sum_i \beta_i =  \overline{\theta_1}  + \cdots + \overline{\theta_l}$.

Part (c) is trivial if the degree of $\lambda$ is $1$. If it has degree $2$, then $\lambda = \alpha + \beta$ with $\alpha \in \mathbb{K}$ and $\beta \in Ker(H_{\mathbb{K}})$, so $x = x_1 + x_2 \beta$ and $y = y_1 + y_2 \beta$ with $x_1, x_2, y_1, y_2 \in \mathbb{K}$. Hence we conclude by standard computations, because $\beta^2 \in \mathbb{K}$. 

If $\beta$ and $\beta'$ are as in (d), from (a), the conjugated of $\beta$ and $\beta'$ are respectively $-\beta$ and $-\beta'$. Hence $N_\mathbb{K}(\beta) = - \beta^2$ and $N_\mathbb{K}(\beta') = - {\beta'}^2$ are both in $\mathbb{K}$ and so $\beta \beta'$ is root of $X^2 -\beta^2 {\beta'}^2 \in \mathbb{K}[X]$.

The degree of $\beta \beta'$ is $1$ if and only if $\beta \beta' = t \in \mathbb{K}$, i. e. if and only if $\beta = \dfrac{t}{\beta'} = \dfrac{-t}{N_\mathbb{K}(\beta')} \beta'$ and so if and only if  $\beta, \beta'$ are linearly dependent over $\mathbb{K}$, because $\dfrac{-t}{N_\mathbb{K}(\beta')} \in \mathbb{K}$. 

Otherwise the degree of $\beta \beta'$ is $2$; then $X^2 -\beta^2 {\beta'}^2$ is its minimal polynomial and so $H_\mathbb{K}(\beta \beta') =0$. This concludes (d).

Finally, if $\beta$ has degree $2$ over $\mathbb{K}$, then $\beta \in \mathcal{R}_\mathbb{K}$ since $char(\mathbb{K}) \ne 2$ and so it belongs to $Ker(H_\mathbb{K})$ if and only if $\beta \notin \mathbb{K}$ and $\beta$ is root of a polynomial in $\mathbb{K}[X]$ of the form $X^2 + a_0$, i. e. if and only if $\beta \notin \mathbb{K}$ and $\beta^2 \in \mathbb{K}$.
\end{proof}

\section{Complete additive Jordan-Chevalley decomposition.}

\begin{notas}
From now on, as in \cite{DoPe2017}, $M$ is a fixed matrix in $M_n(\mathbb{K})$ with minimal polynomial $m(X) =m_1(X)^{\mu_1} \cdots m_r(X)^{\mu_r}$
where $\mu_1, \cdots , \mu_r > 0$ and $m_1(X), \cdots , m_r(X)$ are mutually distinct irreducible polynomials in $\mathbb{K}[X]$ of degrees $d_1, \cdots , d_r$ respectively, $\mathbb{F}$  is the splitting field of $m(X)$ over $\mathbb{K}$ and $\mathbb{K}^\dag$ is the fixed subfield of $\mathbb{F}$ with respect to the group  $Aut(\mathbb{F}/\mathbb{K})$ of $\mathbb{K}$-automorphisms of $\mathbb{F}$.
\end{notas}

\begin{defi}\label{def-matr-CR}
We say that the matrix $M \in M_n(\mathbb{K})$ is $\mathbb{K}$-\emph{regular}, if all its eigenvalues are in $\mathcal{R}_\mathbb{K}$. 

This is equivalent to say that no irreducible component over $\mathbb{K}$ of its minimal polynomial has degree multiple of the characteristic of $\mathbb{K}$ 
\end{defi}

\begin{rem}\label{estems-galois}
If $M$ is $\mathbb{K}$-regular, then every irreducible component $m_i(X)$ of $m(X)$ (as above) has $d_i$ mutually distinct roots $\lambda_{i 1}, \cdots , \lambda_{i d_i}$, which are conjugated over $\mathbb{K}$. 

Hence the extension $\mathbb{F} / \mathbb{K}$ is a Galois extension, i. e. $\mathbb{K}^\dag =  \mathbb{K}$.

Indeed each irreducible component of $m(X)$ is  separable and $\mathbb{F}$ is also normal, because it is a splitting field (see for instance  \cite{Lang2002} Ch. VI, Thm. 1.2).
\end{rem}

\begin{recalls}\label{richiamiDoPe2017}
Every matrix $M \in M_n(\mathbb{K})$ has a unique decomposition, called \emph{additive Jordan-Chevalley decomposition}, $M= S(M) + N(M)$, with $S(M)$ \emph{semisimple} (i.\,e. $S(M)$ is similar over $\overline{\mathbb{K}}$ to a diagonal matrix),
$N(M)$ nilpotent and $S(M)N(M) = N(M)S(M)$ where both $S(M)$ and $N(M)$ have coefficients in $\mathbb{K}^\dag$ and are polynomial expressions of $M$.

Moreover the matrix $S(M)$ has a unique \emph{Frobenius decomposition}. If $M$ is $\mathbb{K}$-regular, this decomposition is 
\begin{center}
$S(M) =\sum_{i=1}^r \sum_{j=1}^{d_i} \lambda_{ij} C_{ij}(M)$,
\end{center}
 where $\cup_{i=1}^r \{\lambda_{i1}, \cdots , \lambda_{i d_i} \}$ is the set of all distinct eigenvalues of $M$ (and of $S(M)$), arranged in conjugacy classes, and 
 
 $\{C_{ij}(M)\}_{i= 1, \cdots , r}^{j= 1 , \cdots , d_i}$ is a \emph{Frobenius system}, 

i.e. $C_{ij}(M) C_{i'j'}(M) = C_{ij}(M)$ if $(i,j)=(i',j')$ and $C_{ij}(M) C_{i'j'}(M) = 0$ 
otherwise, 

with the further condition $\sum_{i=1}^r \sum_{j=1}^{d_i} C_{ij}(M) = I_n$ (the identity matrix of order $n$). 

The matrices $C_{ij}(M)$'s (uniquely determined by $M$) have coefficients in $\mathbb{F}$ and are polynomial expressions of $M$ of degree strictly less than  $deg (m(X))$.

For all previous facts we refer to \cite{DoPe2017} \S 1.
\end{recalls}

\begin{defi}\label{Def-CAJC-Dec}
We say that a $\mathbb{K}$-regular matrix is $\mathbb{K}$-\emph{vertical}, if all its eigenvalues are in $Ker(H_\mathbb{K})$.

A \emph{complete additive Jordan-Chevalley decomposition over} $\mathbb{K}$ \emph{of} $M \in M_n(\mathbb{K})$ is any additive decomposition 
\begin{center}
$M = H + V + N$
\end{center}
where $H, V, N \in M_n(\mathbb{K})$ are mutually commuting, $H$ is diagonalizable (over $\mathbb{K}$), $V$ is $\mathbb{K}$-vertical and semisimple and $N$ is nilpotent.
\end{defi}

\begin{thm}\label{Delta-Sigma-N}
Assume that $M \in M_n(\mathbb{K})$ is $\mathbb{K}$-regular.

Then there exists a unique complete additive Jordan-Chevalley decomposition over  $\mathbb{K}$ of $M$
\begin{center}
$M = H(M) + V(M) + N(M).$
\end{center}
Moreover $H(M),  V(M), N(M)$ are polynomial expressions of $M$ with coefficients in $\mathbb{K}$ and $H(M) +  V(M)$ and $N(M)$ coincide respectively with the semisimple and the nilpotent components of the additive Jordan-Chevalley decomposition of $M$.

We call $H(M)$ and $V(M)$ respectively $\mathbb{K}$-\emph{horizontal} and $\mathbb{K}$-\emph{vertical components} \emph{of} $M$. 
\end{thm}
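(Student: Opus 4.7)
The plan is to enlarge the additive Jordan--Chevalley decomposition of $M$ by splitting its semisimple part via the $\mathbb{K}$-decomposition of the eigenvalues (Theorem \ref{decomp-traccia}).

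First, since $M$ is $\mathbb{K}$-regular, Remark \ref{estems-galois} gives $\mathbb{K}^\dag = \mathbb{K}$, so the decomposition $M = S(M) + N(M)$ recalled in \ref{richiamiDoPe2017} already has $S(M), N(M) \in M_n(\mathbb{K})$, polynomial in $M$. The Frobenius decomposition reads $S(M) = \sum_{i,j} \lambda_{ij} C_{ij}(M)$; by \ref{propr-D-K}(b) the conjugated eigenvalues $\lambda_{i1}, \dots, \lambda_{i d_i}$ share a common horizontal component $\alpha_i := H_\mathbb{K}(\lambda_{ij}) \in \mathbb{K}$, so setting $\beta_{ij} := V_\mathbb{K}(\lambda_{ij}) \in Ker(H_\mathbb{K})$ I would define
\begin{equation*}
H(M) := \sum_{i=1}^{r} \alpha_i \sum_{j=1}^{d_i} C_{ij}(M), \qquad V(M) := \sum_{i,j} \beta_{ij}\, C_{ij}(M).
\end{equation*}
By Theorem \ref{decomp-traccia} we have $H(M) + V(M) = S(M)$, and the Frobenius-system relations make $H(M)$ semisimple with spectrum $\{\alpha_i\} \subset \mathbb{K}$ (hence diagonalizable over $\mathbb{K}$) and $V(M)$ semisimple with spectrum $\{\beta_{ij}\} \subset Ker(H_\mathbb{K})$ (hence $\mathbb{K}$-vertical).

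The main obstacle is the $\mathbb{K}$-rationality of $H(M)$: a priori the $C_{ij}(M)$ only lie in $M_n(\mathbb{F})$, and only their Galois-symmetric combinations descend to $\mathbb{K}$. I would realize $H(M)$ explicitly via the Chinese Remainder Theorem on $\mathbb{K}[X]/(m(X)) \cong \prod_i \mathbb{K}[X]/(m_i(X)^{\mu_i})$: since every $\alpha_i \in \mathbb{K}$, there exists $p(X) \in \mathbb{K}[X]$ with $p(X) \equiv \alpha_i \pmod{m_i(X)^{\mu_i}}$ for each $i$. Decomposing $\overline{\mathbb{K}}^{\,n}$ into the generalized eigenspaces $V_{ij}$ of $M$, the separability of the $m_i$ (a consequence of $\mathbb{K}$-regularity) makes $m_i(M)^{\mu_i}$ vanish on every $V_{ij}$, so $p(M)$ and $H(M)$ both act as $\alpha_i I$ there; hence $H(M) = p(M) \in M_n(\mathbb{K})$. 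Then $V(M) = S(M) - H(M)$ is also a $\mathbb{K}$-polynomial in $M$, so $H(M), V(M), N(M)$ pairwise commute.

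For uniqueness, given another such decomposition $M = H' + V' + N'$, the sum $H' + V'$ is semisimple (commuting semisimple matrices) and commutes with the nilpotent $N'$, so the uniqueness of the Jordan--Chevalley decomposition forces $H' + V' = S(M)$ and $N' = N(M)$. Over $\overline{\mathbb{K}}$ the commuting semisimple $H', V'$ admit a common eigenbasis $(e_k)$: writing $H' e_k = h_k e_k$ and $V' e_k = v_k e_k$ with $h_k \in \mathbb{K}$ and $v_k \in Ker(H_\mathbb{K})$, one has $S(M) e_k = (h_k + v_k) e_k$, so $h_k + v_k$ equals one of the $\lambda_{ij}$; the uniqueness clause of Theorem \ref{decomp-traccia} then forces $h_k = \alpha_i$ and $v_k = \beta_{ij}$, whence $H' = H(M)$ and $V' = V(M)$.
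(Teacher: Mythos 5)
Your proposal is correct, and the skeleton is identical to the paper's: define $H(M)=\sum_i \alpha_i\sum_j C_{ij}(M)$ and $V(M)=\sum_{i,j}\beta_{ij}C_{ij}(M)$ by splitting each eigenvalue of $S(M)$ into its $\mathbb{K}$-horizontal and $\mathbb{K}$-vertical components, and then use the uniqueness of the Jordan--Chevalley decomposition followed by the uniqueness in Theorem \ref{decomp-traccia} on simultaneously diagonalizing eigenbases to pin down $H',V'$. The one place where you take a genuinely different route is the $\mathbb{K}$-rationality of $H(M)$: the paper handles this by Galois descent, citing the argument of \cite{DoPe2017} Thm.\,1.6 to place $H(M),V(M)$ in the fixed field $\mathbb{K}^\dag=\mathbb{K}$, whereas you realize $H(M)$ directly as $p(M)$ for an explicit $p(X)\in\mathbb{K}[X]$ produced by the Chinese Remainder Theorem applied to $\mathbb{K}[X]/(m(X))\cong\prod_i\mathbb{K}[X]/(m_i(X)^{\mu_i})$. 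This is a more elementary and self-contained argument (it also makes the ``polynomial in $M$ over $\mathbb{K}$'' claim transparent rather than inherited), at the small cost of having to verify separately that $p(M)$ and $\sum_i\alpha_i\sum_j C_{ij}(M)$ agree on each generalized eigenspace. One tiny point worth spelling out in your uniqueness step: after obtaining $h_k=\alpha_i$ and $v_k=\beta_{ij}$ on the common eigenbasis $(e_k)$ of $H',V'$, to conclude $H'=H(M)$ you should note that $e_k$ lies in the $\lambda_{ij}$-eigenspace of $S(M)$, which is exactly $Im(C_{ij}(M))$, on which $H(M)$ and $V(M)$ act as $\alpha_i I$ and $\beta_{ij} I$ by construction; this is what identifies $H(M)e_k$ with $H'e_k$. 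The paper's version of the uniqueness argument has the same content, just phrased via $H(M)-H'+V(M)=V'$.
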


\begin{proof}
Remember the notations introduced in \ref{richiamiDoPe2017}. From \ref{estems-galois} the semisimple and nilpotent components of $M$ have both coefficients in $\mathbb{K}$.

We denote: $\alpha_i= \dfrac{\lambda_{i1} + \cdots + \lambda_{id_i}}{d_i} = H_\mathbb{K}(\lambda_{ij}) \in \mathbb{K}$ for every $i=1, \cdots , r$ and every $j=1, \cdots , d_i$. 

Hence, by Proposition \ref{decomp-traccia}, we can write, in a unique way, $\lambda_{ij} = \alpha_i + \beta_{ij}$ with $\beta_{ij} \in Ker(H_\mathbb{K})$ for every $i,j$.

Now let $S(M) = \sum_{i=1}^r  \sum_{j=1}^{d_i} \lambda_{ij} C_{ij}(M)$ be the semisimple component of the Jordan-Chevalley decomposition of $M$ with its Frobenius decomposition.

By decomposing each $\lambda_{ij}$ as $\alpha_i + \beta_{ij}$ we get
\begin{center}
$S(M)= H(M) + V(M)$
\end{center}
where 
$H(M) = \sum_{i=1}^r \alpha_i \sum_{j= 1}^{d_i}  C_{ij}(M)$
and
$V(M)= \sum_{i=1}^r
 \sum_{j=1}^{d_i} \beta_{ij} C_{ij}(M)$;
 
 both matrices are polynomial expressions of $M$ and, arguing as in proof of 
 
 \cite{DoPe2017} Thm. 1.6, they have coefficients in the fixed field $\mathbb{K}^\dag = \mathbb{K}$ (remember \ref{estems-galois}).
 
$H(M)$ and $V(M)$ are respectively diagonalizable over $\mathbb{K}$ and over $\mathbb{F}$ with eigenvalues $\alpha_i$'s and $\beta_{ij}$'s by \ref{richiamiDoPe2017} and by \cite{DoPe2017} Prop. 1.9.
 
This allows to conclude about the existence of a complete additive Jordan-Chevalley decomposition over $\mathbb{K}$ of $M$  in terms of polynomial expressions and about its relations with the Jordan-Chevalley decomposition.

Now let $M = H' + V' + N'$ be any other complete additive Jordan-Chevalley decomposition over  $\mathbb{K}$ of $M$.

$H' + V'$ is semisimple, because it is sum of commuting semisimple matrices, therefore, by uniqueness of the Jordan-Chevalley decomposition, $N'= N(M)$ and $H' + V' = H(M) + V(M)$. This implies $H(M) - H' + V(M) =  V'$. Since $H',  V'$ commute with $M$ and so with $H(M), V(M)$ and since the four matrices are semisimple, every  eigenvalue $\sigma'$ of $V'$ can be written as $\sigma' = \delta - \delta' + \sigma$ with $\delta, \delta', \sigma$ eigenvalues of $H(M), H', V(M)$ respectively. But $\sigma', \delta - \delta' + \sigma$ are in $\mathcal{R}_\mathbb{K}$ by \ref{prime-propr-C_K} (c) and we can conclude $\delta - \delta' =0$ and $\sigma = \sigma'$ by \ref{decomp-traccia} (because $\delta - \delta' \in \mathbb{K}$ and $\sigma, \sigma' \in Ker(H_\mathbb{K})$). Therefore $H' = H(M)$ and $V' = V(M)$.
\end{proof}

\begin{rems} 
a) From the proof of the previous Theorem we get that

 $H(M)= \sum_{i=1}^r \alpha_i \sum_{j= 1}^{d_i}  C_{ij}(M)$ with (possibly repeated) eigenvalues $\alpha_1, \cdots , \alpha_r$
and

$V(M) = \sum_{i=1}^r
 \sum_{j=1}^{d_i} \beta_{ij} C_{ij}(M)$ with (possibly repeated) eigenvalues $\beta_{ij}$, $1 \le i \le r$, $1 \le j \le d_i$

\smallskip

b) Up to summing the matrices $C_{ij}(M)$'s having possibly the same $\alpha_i$'s or the same $ \beta_{ij}$'s, the previous give the Frobenius decompositions of $H(M)$ and of $V(M)$.
\end{rems}

\section{Fine Frobenius decomposition of a matrix.}

\begin{defis} \label{splitting-bound}
Let $\mathbb{L}/\mathbb{K}$ be an extension of fields.

a) The \emph{splitting bound over} $\mathbb{L}$ of a polynomial $f(X) \in \mathbb{K}[X]$ is the maximum degree among its irreducible factors in $\mathbb{L}[X]$.

b) The \emph{splitting bound over} $\mathbb{L}$ of the matrix $M \in M_n(\mathbb{K})$ is the splitting bound over $\mathbb{L}$ of its minimal polynomial.
\end{defis}  

\begin{rems}\label{oss-dopo-splitting-bound}
a) A polynomial has splitting bound $1$ over a field $\mathbb{L}$ if and only if $\mathbb{L}$ contains its splitting field. Hence the matrix $M \in M_n(\mathbb{K})$ has splitting bound $1$ over $\mathbb{L}$ if and only if its eigenvalues are all in $\mathbb{L}$.

\smallskip

b) If $\mathbb{K}$ has characteristic different from $2$ and the matrix $M \in M_n(\mathbb{K})$ has splitting bound at most $2$ over $\mathbb{K}$, then it is easy to get that $M$ is $\mathbb{K}$-regular.
\end{rems}

\begin{defi}\label{def-fine-Frob-Dec}
We call \emph{fine Frobenius decomposition} of the matrix $M \in M_n(\mathbb{K})$ any decomposition
\begin{center}
$M = \sum_{i=1}^s \gamma_i A_i - \sum_{j=1}^t \dfrac{\alpha_j}{n_j} B_j^2 + \sum_{j=1}^t B_j$\end{center}

such that 
$A_1, \cdots A_s, B_1, \cdots , B_t$ is a non-empty family of matrices in $M_n(\mathbb{K}) \setminus \{0\}$

and, for all possible indices, all the following conditions hold:

$\gamma_i, \alpha_j, n_j \in  \mathbb{K}$, but $\sqrt{-n_j} \notin \mathbb{K}$ (where $\sqrt{-n_j}$ denotes an arbitrary root in $\overline{\mathbb{K}}$ of $X^2 +n_j$);

$\gamma_i \ne 0$ and $\gamma_i \ne \gamma_h$ as soon as $i \ne h$;

$\alpha_j+\sqrt{-n_j} \ne \alpha_l \pm \sqrt{-n_l}$ as soon as $j \ne l$;

$A_i A_h = \delta_{ih}A_i$;

$A_i B_j = B_jA_i = 0$;

$B_j B_l =0$ as soon as $j \ne l$;

$B_j^3 = - n_j B_j$.

We say that the matrices $A_1, \cdots , A_s, B_1, \cdots , B_t$ are \emph{fine Frobenius covariants of} $M$; more precisely $A_1, \cdots , A_s$ are \emph{horizontal covariants of} $M$ and $B_1, \cdots , B_t$ are \emph{vertical covariants} of $M$.
\end{defi}

\begin{lemma}
Assume that $\mathbb{K}$ has characteristic different from $2$ and that $M \in M_n(\mathbb{K})$ has a fine Frobenius decomposition:
\begin{center}
$
M = \sum_{i=1}^s \gamma_i A_i - \sum_{j=1}^t \dfrac{\alpha_j}{n_j} B_j^2 + \sum_{j=1}^t B_j.
$
\end{center}
Then

a) $M$ is semisimple, nonzero and its mutually distinct nonzero eigenvalues are $\gamma_1, \cdots , \gamma_s$, \, $\alpha_1 + \sqrt{-n_1}, \, \alpha_1 - \sqrt{-n_1}, \cdots , \alpha_t + \sqrt{-n_t}, \, \alpha_t - \sqrt{-n_t}$.

b) $\overline{\mathbb{K}}^n = [\oplus_{i=1}^s Im (A_i)] \oplus [\oplus_{j=1}^t Im(B_j)] \oplus Ker(M)$,

where for all possible indices $i,j$ we have 

$Im(A_i) = Ker(M- \gamma_i I_n)$; 

$Im(B_j) = Ker(M-(\alpha_j+ \sqrt{-n_j})I_n) \oplus Ker(M-(\alpha_j - \sqrt{-n_j})I_n)$; 

$Ker(M) = [\cap_{i=1}^s Im(A_i)] \cap [\cap_{j=1}^t Ker(B_j)]$; 

$Ker(M-(\alpha_j+ \sqrt{-n_j})I_n)= Ker(B_j -\sqrt{-n_j} I_n)$; 

$Ker(M-(\alpha_j - \sqrt{-n_j})I_n)= Ker(B_j +\sqrt{-n_j} I_n)$.

c) For every $i= 1, \cdots , s$, \, $A_i$ acts as the identity map on 
$Ker(M-\gamma_i I_n)$ and as the null map on any other eigenspace of $M$. Hence $A_i$ is uniquely determined by $M$.

d)   For every $j=1, \cdots , t$, \,  $B_j$ acts as the multiplication by $-\sqrt{-n_j}$ 

on $Ker(M-(\alpha_j-\sqrt{-n_j})I_n)$ and as the null map on any other eigenspace of $M$. Hence $B_j$ is uniquely determined by $M$.

e) For every $j=1, \cdots , t$, $\lambda_j = \alpha_j + \sqrt{-n_j}$ has degree $2$ over $\mathbb{K}$ and its conjugated is  $\overline{\lambda}_j = \alpha_j - \sqrt{-n_j}$. Hence  $\alpha_j = H_\mathbb{K}(\lambda_j)$, $n_j = -V_\mathbb{K}(\lambda_j)^2$ and $M$ has splitting bound at most $2$ over $\mathbb{K}$.
\end{lemma}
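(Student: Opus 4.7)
The plan is to introduce auxiliary idempotents that absorb the $B_j^2$ terms, use them to produce a direct sum decomposition of $\overline{\mathbb{K}}^n$ into eigenspaces of $M$, and then read off (a)--(e). Set $E_j := -\tfrac{1}{n_j}B_j^2$. The relation $B_j^3 = -n_j B_j$ gives $E_j^2 = E_j$ and $E_j B_j = B_j E_j = B_j$, while the other orthogonality conditions give $E_j E_l = E_j B_l = B_l E_j = 0$ for $l \ne j$ and $A_i E_j = E_j A_i = 0$. Moreover $B_j^2 = -n_j I_n$ on $Im(B_j) = Im(E_j)$, so $B_j|_{Im(B_j)}$ has minimal polynomial $X^2 + n_j$, whose roots $\pm\sqrt{-n_j}$ are distinct (since $n_j \neq 0$ and $char(\mathbb{K}) \neq 2$); hence $B_j|_{Im(B_j)}$ is diagonalizable over $\overline{\mathbb{K}}$. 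The fine Frobenius decomposition rewrites as $M = \sum_i \gamma_i A_i + \sum_j \alpha_j E_j + \sum_j B_j$.

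Next, set $P := \sum_i A_i + \sum_j E_j$. The relations above make $P$ idempotent, with $Im(P) = \bigoplus_i Im(A_i) \oplus \bigoplus_j Im(B_j)$. I claim $Ker(P) = Ker(M)$. If $Pv = 0$, then every $A_i v$, $E_j v$, and hence $B_j v = B_j E_j v$, vanishes, so $Mv = 0$. Conversely if $Mv = 0$, left-multiplying by $A_i$ yields $\gamma_i A_i v = 0$, so $A_i v = 0$; left-multiplying by $E_j$ and by $B_j$ yields respectively $\alpha_j E_j v + B_j v = 0$ and $\alpha_j B_j v - n_j E_j v = 0$, a linear system in $(E_j v, B_j v)$ with determinant $\alpha_j^2 + n_j = (\alpha_j + \sqrt{-n_j})(\alpha_j - \sqrt{-n_j})$, which is nonzero (otherwise one of $\pm \sqrt{-n_j}$ would equal $\mp \alpha_j \in \mathbb{K}$, contradicting $\sqrt{-n_j} \notin \mathbb{K}$). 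Hence $E_j v = B_j v = 0$ and $Pv = 0$. This gives the direct sum decomposition $\overline{\mathbb{K}}^n = \bigoplus_i Im(A_i) \oplus \bigoplus_j Im(B_j) \oplus Ker(M)$.

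I then restrict $M$ to each summand: on $Im(A_i)$ the formula collapses to $M = \gamma_i I_n$; on $Im(B_j)$ it collapses to $M = \alpha_j I_n + B_j$, and splitting $Im(B_j)$ into the two $\overline{\mathbb{K}}$-eigenspaces of $B_j$ yields the eigenvalues $\alpha_j \pm \sqrt{-n_j}$ of $M$ on that piece; on $Ker(M)$ it acts as $0$. The hypotheses on the $\gamma_i$'s and on the $\alpha_j + \sqrt{-n_j}$'s, together with $\sqrt{-n_j} \notin \mathbb{K}$ (which rules out $\gamma_i = \alpha_j \pm \sqrt{-n_j}$ and $\alpha_j \pm \sqrt{-n_j} = 0$), ensure that all these scalars are pairwise distinct and nonzero. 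Hence the direct sum decomposition is the eigenspace decomposition of $M$, which proves (a) and (b); in particular $Ker(M - (\alpha_j \pm \sqrt{-n_j})I_n) = Ker(B_j \mp \sqrt{-n_j} I_n)$, because any eigenvector of $B_j$ for $\pm \sqrt{-n_j} \neq 0$ automatically lies in $Im(B_j)$. Semisimplicity of $M$ follows from having a basis of eigenvectors, and $M \neq 0$ because the family $\{A_i, B_j\}$ is nonempty, so some summand other than $Ker(M)$ is nontrivial and contributes a nonzero eigenvalue. Parts (c) and (d) are then immediate by reading off how $A_i$ and $B_j$ act on each eigenspace of $M$, which uniquely determines each matrix from $M$. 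Part (e) is a direct verification: $\lambda_j = \alpha_j + \sqrt{-n_j}$ is a root of the polynomial $(X-\alpha_j)^2 + n_j \in \mathbb{K}[X]$, irreducible over $\mathbb{K}$ because $\sqrt{-n_j} \notin \mathbb{K}$, so $\lambda_j$ has degree $2$ with conjugate $\alpha_j - \sqrt{-n_j}$; by \ref{K-proj} and \ref{decomp-traccia}, $H_\mathbb{K}(\lambda_j) = \alpha_j$ and $V_\mathbb{K}(\lambda_j)^2 = -n_j$; and the minimal polynomial of $M$ divides $X \cdot \prod_i (X - \gamma_i) \cdot \prod_j [(X - \alpha_j)^2 + n_j]$, whose irreducible factors all have degree at most $2$, giving splitting bound at most $2$ over $\mathbb{K}$. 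The main obstacle is the identification $Ker(P) = Ker(M)$, specifically the inclusion $Ker(M) \subseteq Ker(P)$, where the hypothesis $\sqrt{-n_j} \notin \mathbb{K}$ is crucially invoked via the nonvanishing of $\alpha_j^2 + n_j$.
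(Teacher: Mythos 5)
Your proof is correct and genuinely fills a gap the paper leaves open: the paper's ``proof'' of this lemma is only a citation (``These facts follow from standard arguments of linear algebra; ... see \cite{YTT2011} Ch.\,2 and \cite{DoPe2017} Lemma 1.2''), whereas you give a self-contained argument. Your strategy of introducing the idempotents $E_j=-\tfrac{1}{n_j}B_j^2$, rewriting $M=\sum_i\gamma_iA_i+\sum_j(\alpha_jE_j+B_j)$, and then proving $Ker(P)=Ker(M)$ via the $2\times 2$ linear system with determinant $\alpha_j^2+n_j\neq 0$ is clean and is exactly the kind of argument the cited references use for Frobenius-type spectral decompositions; it isolates precisely where $\sqrt{-n_j}\notin\mathbb{K}$ (hence $n_j\ne 0$) and $char(\mathbb{K})\ne 2$ enter.

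One small point deserves an explicit line. You assert that $B_j\vert_{Im(B_j)}$ has minimal polynomial $X^2+n_j$, and later use this (implicitly) to get that \emph{both} eigenspaces $Ker(B_j\mp\sqrt{-n_j}I_n)$ are nonzero, so that both $\alpha_j+\sqrt{-n_j}$ and $\alpha_j-\sqrt{-n_j}$ genuinely occur as eigenvalues of $M$ as claimed in (a). What you actually verify is only that $X^2+n_j$ annihilates $B_j\vert_{Im(B_j)}$. To rule out the degenerate case where the minimal polynomial is a linear factor, note that if $B_j\vert_{Im(E_j)}=\pm\sqrt{-n_j}\,I$, then $B_j=B_jE_j=\pm\sqrt{-n_j}\,E_j$; since $E_j\in M_n(\mathbb{K})\setminus\{0\}$ and $\sqrt{-n_j}\notin\mathbb{K}$, this would force $B_j\notin M_n(\mathbb{K})$, a contradiction. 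With that line added the proof is complete.
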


\begin{proof} 
These facts follow from standard arguments of linear algebra; the proofs of similar statements can be found for instance in \cite{YTT2011} Ch.2 (see also \cite{DoPe2017} Lemma 1.2).
\end{proof}

\begin{prop}\label{dec-fine-unica}
Assume that $\mathbb{K}$ has characteristic different from $2$. 

Then $M \in M_n(\mathbb{K})$ has a fine Frobenius decomposition
if and only if it is semisimple, nonzero and its splitting bound over $\mathbb{K}$ is at most $2$. 

If this is the case, the decomposition is unique up to to the order of the addends and it is given by
\begin{center}
$
M = \sum_{i=1}^s \gamma_i \mathcal{A}_i(M) + \sum_{j=1}^t \dfrac{H_\mathbb{K}(\lambda_j)}{V_\mathbb{K}(\lambda_j)^2} \mathcal{B}_{j}(M)^2 + \sum_{j=1}^t \mathcal{B}_{j}(M)
$
\end{center}
where $\gamma_1 , \cdots , \gamma_s$ are the distinct nonzero eigenvalues of $M$ belonging to $\mathbb{K}$;  

$\lambda_1, \  \overline{\lambda_1 }, \cdots , \lambda_t, \ \overline{\lambda_t} $ are the distinct eigenvalues of $M$ not in $\mathbb{K}$;

$\mathcal{A}_i(M) = C_{i1}(M)$ ($i = 1, \cdots , s$) matrices in $M_n(\mathbb{K})$;

$\mathcal{B}_{j}(M)=   V_\mathbb{K}(\lambda_j)\, [C_{s+j, 1}(M) -   \overline{C}_{s+j, 1}(M)]$ 
($ j = 1, 	\cdots, t$) matrices in $M_n(\mathbb{K})$. 
\end{prop}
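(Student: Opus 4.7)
My plan is to prove the two implications separately and handle uniqueness last. The $(\Rightarrow)$ direction is immediate: if $M$ admits a fine Frobenius decomposition, the preceding Lemma tells us that $M$ is semisimple, nonzero, and has splitting bound at most $2$ over $\mathbb{K}$, so nothing remains to do.

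For the $(\Leftarrow)$ direction, I will build the decomposition from the Frobenius decomposition recalled in \ref{richiamiDoPe2017}. Since $\mathrm{char}(\mathbb{K})\ne 2$ and $M$ has splitting bound at most $2$, by \ref{oss-dopo-splitting-bound}(b) $M$ is $\mathbb{K}$-regular, and being semisimple it coincides with $S(M)=\sum_{i=1}^r\sum_{j=1}^{d_i}\lambda_{ij}C_{ij}(M)$ with every $d_i\in\{1,2\}$. I will separate the indices $i$ for which $d_i=1$ (eigenvalues $\lambda_{i1}\in\mathbb{K}$) from those with $d_i=2$ (relabelling the latter as $s+j$, $j=1,\dots,t$), drop the contribution of any zero eigenvalue (which disappears automatically in $\sum \gamma_i C_{i1}(M)$), and set $\mathcal{A}_i(M):=C_{i1}(M)$ and $\mathcal{B}_j(M):=V_\mathbb{K}(\lambda_j)\,[C_{s+j,1}(M)-C_{s+j,2}(M)]$, where $\lambda_{s+j,2}=\overline{\lambda_{s+j,1}}$ by \ref{car-non-2}(a) so that $C_{s+j,2}(M)=\overline{C}_{s+j,1}(M)$. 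Using $\lambda_{s+j,\ell}=H_\mathbb{K}(\lambda_j)\pm V_\mathbb{K}(\lambda_j)$ and the Frobenius orthogonality $C_{s+j,1}C_{s+j,2}=0$, $C_{s+j,\ell}^{2}=C_{s+j,\ell}$, a direct computation gives $\mathcal{B}_j(M)^2=-n_j[C_{s+j,1}(M)+C_{s+j,2}(M)]$ (with $n_j=-V_\mathbb{K}(\lambda_j)^2$), which immediately yields the asserted formula $M=\sum_i \gamma_i\mathcal{A}_i(M)+\sum_j \frac{H_\mathbb{K}(\lambda_j)}{V_\mathbb{K}(\lambda_j)^2}\mathcal{B}_j(M)^2+\sum_j \mathcal{B}_j(M)$.

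The verifications of the algebraic axioms in Definition \ref{def-fine-Frob-Dec} follow painlessly from the Frobenius system identities $C_{ij}(M)C_{i'j'}(M)=\delta_{(i,j),(i',j')}C_{ij}(M)$: orthogonality of the $\mathcal{A}_i$'s, the relations $\mathcal{A}_i\mathcal{B}_j=\mathcal{B}_j\mathcal{A}_i=0$, and $\mathcal{B}_j\mathcal{B}_l=0$ for $j\ne l$ are immediate; the cubic relation follows from $\mathcal{B}_j^{3}=\mathcal{B}_j\cdot \mathcal{B}_j^{2}=-n_jV_\mathbb{K}(\lambda_j)(C_{s+j,1}-C_{s+j,2})=-n_j\mathcal{B}_j$. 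The hypothesis $\sqrt{-n_j}\notin\mathbb{K}$ is exactly the statement that $\lambda_j\notin\mathbb{K}$, which holds since $\lambda_j$ has degree $2$ over $\mathbb{K}$. The main obstacle, and the one I expect to require the most care, is showing $\mathcal{A}_i(M),\mathcal{B}_j(M)\in M_n(\mathbb{K})$: for this I will reproduce the Galois-invariance argument of \cite{DoPe2017} Thm.~1.6. Thanks to \ref{estems-galois} the extension $\mathbb{F}/\mathbb{K}$ is Galois, and every $\tau\in\mathrm{Aut}(\mathbb{F}/\mathbb{K})$ permutes the conjugacy class $\{\lambda_{s+j,1},\lambda_{s+j,2}\}$, acting accordingly on the covariants as $\tau(C_{s+j,1}(M))=C_{s+j,2}(M)$ when $\tau$ swaps them; since $\tau$ simultaneously sends $V_\mathbb{K}(\lambda_j)$ to $-V_\mathbb{K}(\lambda_j)$, the two sign changes cancel and $\mathcal{B}_j(M)$ is fixed. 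Analogously each $C_{i1}(M)$ with $d_i=1$ is already Galois-fixed, so $\mathcal{A}_i(M)\in M_n(\mathbb{K})$.

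Finally, uniqueness of the decomposition (up to reordering) is a direct consequence of parts (c) and (d) of the preceding Lemma: any family of fine Frobenius covariants of $M$ is forced to act as the identity, the multiplication by $-\sqrt{-n_j}$, or the zero map on each eigenspace of $M$, so the covariants themselves are intrinsically determined by $M$; the scalars $\gamma_i,\alpha_j,n_j$ coincide with the data $\gamma_i,H_\mathbb{K}(\lambda_j),-V_\mathbb{K}(\lambda_j)^{2}$ by part (e) of that Lemma, and this closes the argument.
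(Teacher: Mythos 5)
Your proposal is correct and follows essentially the same route as the paper's own proof: the forward implication and uniqueness are delegated to the preceding Lemma, and the converse is obtained by splitting the Frobenius decomposition $S(M)=\sum\lambda_{ij}C_{ij}(M)$ according to whether $d_i=1$ or $d_i=2$, identifying $C_{s+j,2}(M)=\overline{C}_{s+j,1}(M)$ via the $\mathbb{K}$-involution, and setting $\mathcal{A}_i(M)=C_{i1}(M)$ and $\mathcal{B}_j(M)=V_\mathbb{K}(\lambda_j)[C_{s+j,1}(M)-\overline{C}_{s+j,1}(M)]$. The only difference is cosmetic: you spell out the verification of the axioms of Definition \ref{def-fine-Frob-Dec} and re-derive the Galois-invariance argument, whereas the paper simply cites \cite{DoPe2017} Prop.\,1.5 and Thm.\,1.6 and leaves the axiom checks implicit.
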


\begin{proof}
The previous lemma gives easily one implication and the uniqueness.

For the converse, we consider all irreducible components over $\mathbb{K}$, $m_i(X) \ne X$, of the minimal polynomial of $M$.

Up to reorder them, we can assume that the first $s$ polynomials have degree $1$ and that the last $t$ polynomials have degree $2$. We denote by $\gamma_1 , \cdots , \gamma_s$ the distinct nonzero eigenvalues belonging to $\mathbb{K}$ and, remembering \ref{car-non-2} (a), by $\lambda_j = H_\mathbb{K}(\lambda_j)  + V_\mathbb{K}(\lambda_j)$, $\overline{\lambda}_j = H_\mathbb{K}(\lambda_j)  - V_\mathbb{K}(\lambda_j)$ with $H_\mathbb{K}(\lambda_j) \in \mathbb{K}$ and $V_\mathbb{K}(\lambda_j) \in Ker(H_\mathbb{K}) \setminus \{0\}$, $1 \le j \le t$, the distinct eigenvalues not in $\mathbb{K}$.

We can write the semisimple component of $M$ as 
\begin{center}
$S(M) = \sum_{i=1}^s \gamma_i C_{i1}(M) + \sum_{i=1}^t [\lambda_i C_{s+i, 1}(M) + \overline{\lambda}_i C_{s+i, 2}(M)]$.
\end{center}

As shown in \cite{DoPe2017} Prop.\,1.5, $C_{i1}(M)$ has coefficients in $\mathbb{K}$ and $C_{s+i,1}(M), C_{s+i,2}(M)$ have coefficients in $\mathbb{K}(\lambda_i)$. 

As checked in the proof of \cite{DoPe2017} Thm.\,1.6, 

$\sigma(C_{s+i,1}(M))= C_{s+i,2}(M)$, where $\sigma \in Aut(\mathbb{F} / \mathbb{K})$ maps $\lambda_i$ into $\overline{\lambda}_i$. 

Now $[\mathbb{K}(\lambda_i) : \mathbb{K}] = 2$, hence the restriction of such a $\sigma$ to $\mathbb{K}(\lambda_i)$ is the $\mathbb{K}$-involution and so: $C_{s+i,2}(M) = \overline{C}_{s+i,1}(M)$.

Therefore we get:
\begin{center}
$S(M) = \sum_{i=1}^s \gamma_i C_{i1}(M) + \sum_{i=1}^t [\lambda_i C_{s+i, 1}(M) + \overline{\lambda}_i \, \overline{C}_{s+i, 1}(M)].$
\end{center}
We conclude by setting $\mathcal{A}_i(M) = C_{i1}(M)$ ($1 \le i \le s$) and 

$\mathcal{B}_{j}(M) = V_\mathbb{K}(\lambda_j) [C_{s+j, 1}(M) -  \overline{C}_{s+j, 1}(M)]$ ($ 1 \le j \le t$).
\end{proof}

\begin{rems}\label{oss-dopo-dec-fine-unica}
a) Note that, by the previous Proposition, the fine Frobenius covariants of $M$ are polynomial functions of $M$, expressed by means of its so-called \emph{Frobenius covariants} and are uniquely determined by $M$;
the fine Frobenius decomposition is the so-called \emph{Frobenius decomposition} if and only if $M$ is diagonalizable over $\mathbb{K}$ (see \cite{DoPe2017}).

Note also that $\dfrac{\mathcal{B}_j(M)^2}{V_\mathbb{K}(\lambda_j)^2}$ is idempotent for every $j$.

\smallskip

b) In the expression of $M$ in the previous Proposition, the possible eigenvalue $0$ does not appear explicitly and we can retrieve it as follows.
We set:
\begin{center}
$\gamma_0 = 0 \mbox{ and } \mathcal{A}_0(M) = I_n - \sum_{i=1}^s \mathcal{A}_i(M) - \sum_{j=1}^t \dfrac{\mathcal{B}_j(M)^2}{V_\mathbb{K}(\lambda_j)^2}.$
\end{center} 
Note that $\mathcal{A}_0(M) \ne 0$ if and only if $\gamma_0=0$ is an eigenvalue of $M$.

Therefore from now on, we can write the fine Frobenius decomposition of $M$ as:
\begin{center}
$
M = \sum_{i=0}^s \gamma_i \mathcal{A}_i(M) +  \sum_{j=1}^t \dfrac{H_\mathbb{K}(\lambda_j)}{V_\mathbb{K}(\lambda_j)^2} \mathcal{B}_{j}(M)^2 + \sum_{j=1}^t \mathcal{B}_{j}(M).
$
\end{center}
\end{rems}

\section{Some formulas for series of powers of matrices on a valued field.} 

\begin{rem} In this section we want to assume that $\mathbb{K}$ is endowed with a  \emph{absolute value} $|\cdot|$. We call such a pair $(\mathbb{K}, |\cdot|)$ a \emph{valued field}. We refer for instance to \cite{Jac1989} Ch.\,9, to \cite{War1989} Ch.\,III, Ch.\,IV, to 
\cite{Lang2002} Ch.\,XII and to \cite{Lor2008} Ch.\,23 for more information. 

Let $(\mathbb{K}, |\cdot|)$ be a valued field. The absolute value over $\mathbb{K}$ extends in a unique way to its \emph{completion} $\mathbb{K}^c$; this one extends in a unique way to an absolute value over a fixed algebraic closure $\overline{\mathbb{K}^c}$ of $\mathbb{K}^c$ and finally the last extends in a unique way to the completion $(\overline{\mathbb{K}^c})^c$  (see for instance to \cite{Lor2008} Thm.\,2 p.\,48, Ostrowski's Thm. p.\,55 and Thm.\,4' p.\,60). We denote all extensions always by the same symbol $|\cdot|$.

Note that the field $\{ \alpha \in \overline{\mathbb{K}^c} \ / \ \alpha \mbox{ is algebraic over } \mathbb{K} \}$ is the unique algebraic closure of $\mathbb{K}$ contained in $\overline{\mathbb{K}^c}$ and therefore it can be identified with $\overline{\mathbb{K}}$. 

By restriction, we get  an absolute value over $\overline{\mathbb{K}}$, extending the absolute value of $\mathbb{K}$.
\end{rem}

\begin{lemma}\label{completamento}
Let $(\mathbb{K}, |\cdot|)$ be a valued field.

a) If $\mathbb{K}$ is algebraically closed, then its completion $\mathbb{K}^c$ (with respect to $|\cdot|$) is algebraically closed too.

b) $(\overline{\mathbb{K}^c})^c$ is algebraically closed and complete with respect to the unique extension to it of $|\cdot|$.

\end{lemma}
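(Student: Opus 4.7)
The plan for (a) is a direct approximation-and-root-selection argument. Let $f(X) = X^n + \sum_{i=0}^{n-1} a_i X^i \in \mathbb{K}^c[X]$ be monic of degree $n \geq 1$, and fix a root $\beta$ of $f$ in $\overline{\mathbb{K}^c}$, equipped with the unique extension of $|\cdot|$ mentioned in the preceding remark. Since $\mathbb{K}$ is dense in $\mathbb{K}^c$, I would choose monic polynomials $g_k(X) = X^n + \sum_{i=0}^{n-1} b_i^{(k)} X^i \in \mathbb{K}[X]$ with $|b_i^{(k)} - a_i| \to 0$ for every $i$. As $\mathbb{K}$ is algebraically closed, each $g_k$ splits as $g_k(X) = \prod_{j=1}^n (X - \alpha_{k,j})$ with $\alpha_{k,j} \in \mathbb{K} \subseteq \mathbb{K}^c$.

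The key step is the estimate
\[
\prod_{j=1}^n |\beta - \alpha_{k,j}| \;=\; |g_k(\beta)| \;=\; |g_k(\beta) - f(\beta)| \;\leq\; \sum_{i=0}^{n-1} |b_i^{(k)} - a_i|\,|\beta|^i,
\]
whose right-hand side tends to $0$ because $|\beta|$ is bounded in terms of the $|a_i|$ by a standard Cauchy-type bound on roots. This forces $\min_j |\beta - \alpha_{k,j}| \leq |g_k(\beta)|^{1/n} \to 0$, so choosing $\alpha_k := \alpha_{k, j_k}$ realizing the minimum at each step produces a sequence in $\mathbb{K}$ converging to $\beta$ in $\overline{\mathbb{K}^c}$. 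In particular $\{\alpha_k\}$ is Cauchy in $\mathbb{K}$ and hence converges to some $\beta' \in \mathbb{K}^c$ by definition of completion; by uniqueness of limits in the Hausdorff metric space $\overline{\mathbb{K}^c}$, $\beta = \beta' \in \mathbb{K}^c$. Thus every root of $f$ lies in $\mathbb{K}^c$, proving (a).

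Part (b) follows immediately by applying (a) to the algebraically closed field $\overline{\mathbb{K}^c}$ with its unique extension of $|\cdot|$ discussed right before the lemma: the completion $(\overline{\mathbb{K}^c})^c$ is algebraically closed by (a) and complete by construction. The main subtlety, concentrated in part (a), is to guarantee that the approximating roots $\alpha_k$ do not drift among distinct roots of $f$ as $k$ varies; this is automatic because each $\alpha_k$ is explicitly picked close to the fixed $\beta$, so the convergence in $\overline{\mathbb{K}^c}$ transfers to the Cauchy property in $\mathbb{K}$. The argument uses only multiplicativity of $|\cdot|$ and the triangle inequality, so it is insensitive to whether the absolute value is archimedean or non-archimedean.
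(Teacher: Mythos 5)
Your argument is correct and gives a clean, self-contained proof of (a), while the paper simply cites external results: for the archimedean case the authors invoke Ostrowski's classification of complete archimedean valued fields (so $\mathbb{K}^c$ must be $\mathbb{C}$), and for the non-archimedean case they point to Lorenz, \emph{Algebra II}, 24.15. Your root-approximation argument is the standard one used to prove, e.g., that $\mathbb{C}_p$ is algebraically closed, and it has the advantage of treating both cases uniformly, relying only on multiplicativity, the (weak) triangle inequality, and the fact that the absolute value extends to $\overline{\mathbb{K}^c}$ --- exactly as you observe in your closing remark. Two minor points: the ``Cauchy-type bound on roots'' you mention is not actually needed, since $|\beta|$ is a fixed real number once $\beta$ is chosen, so the right-hand side $\sum_i |b_i^{(k)} - a_i|\,|\beta|^i$ tends to $0$ simply because each $|b_i^{(k)} - a_i| \to 0$; and strictly speaking you should note that the restriction to $\mathbb{K}$ of the extended absolute value on $\overline{\mathbb{K}^c}$ coincides with the original one (which the preceding remark guarantees), so that convergence in $\overline{\mathbb{K}^c}$ of the $\alpha_k \in \mathbb{K}$ really does give a Cauchy sequence in $(\mathbb{K}, |\cdot|)$. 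Part (b) is handled the same way in both proofs: apply (a) to the algebraically closed field $\overline{\mathbb{K}^c}$.
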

\begin{proof}
The proof of (a) follows easily from Ostrowski's Theorem in archimedean case (see \cite{Lor2008} p.\,55), while for the non-archimedean case we refer to \cite{Lor2008} 24.15 p.\,316.

Finally (b) follows directly from (a).
\end{proof}

\begin{remsdefs}\label{recall-valued-fields}
a) Let $f(X) = \sum_{m=0}^{+ \infty} a_m X^m$ be a series with coefficients in the valued field $(\mathbb{K}, |\cdot|)$ and $R_f \in \mathbb{R} \cup \{+ \infty\}$ be the \emph{radius of convergence} of the associated real series $\sum_{m=0}^{+ \infty} |a_m| X^m$.

We denote by $\Omega_{f,\mathbb{K}}$ the subset of matrices  $A \in M_n(\mathbb{K})$,  whose \emph{the spectral radius} is strictly less than $R_f$. 

Note that if $A \in \Omega_{f,\mathbb{K}}$, then $f(A) \in M_n(\mathbb{K}^c)$ (see for instance \cite{DoPe2017} 3.1(c)).

Let $\widehat{\Omega}_{f,\mathbb{K}}$ be the subset of $\Omega_{f,\mathbb{K}}$ of matrices $A$ whose eigenvalues $\lambda = \alpha + \beta$ (with their $\mathbb{K}$-decompositions) satisfy:

i) $|\alpha| + |\beta| < R_f$, if the absolute value is archimedean

ii) $max(|\alpha|, |\beta|) < R_f$,  if the absolute value is non-archimedean.

\smallskip

b) Let $(\mathbb{K}, |\cdot|)$ be a valued field. In $\overline{\mathbb{K}}$ we choose a square root of $-1$, denoted by $\sqrt{-1}$ and let $f(X) = \sum_{m=0}^{+ \infty} a_m X^m$ be a series as above.

Let $\lambda =\alpha + \beta$ be in $\mathcal{R}_\mathbb{K}$ together with its $\mathbb{K}$-decomposition.

We consider the formal series 
$$\mathcal{R} f(\lambda) = \sum_{m=0}^{+\infty} a_m \sum_{h=0}^{\lfloor m/2\rfloor} {m \choose 2h} \alpha^{m-2h} \beta^{2h}$$
$$\mathcal{I} f(\lambda) = -\sqrt{-1}\, [\sum_{m=1}^{+\infty} a_m \sum_{h=1}^{\lfloor (m+1)/2\rfloor} {m \choose 2h-1} \alpha^{m-2h+1} \beta^{2h-1}].$$
It is easy to check that, if $|\lambda|, |\alpha|, |\beta| < R_f$, then 

$f(\lambda)$, $\mathcal{R} f(\lambda)$, $\mathcal{I} f(\lambda)$ converge in $(\overline{\mathbb{K}})^c$ and
\begin{center}
$
f(\lambda) = \mathcal{R} f(\lambda) + \sqrt{-1} \, \mathcal{I} f(\lambda).
$
\end{center}
\end{remsdefs}

\begin{prop}\label{decf(M)}
Let $f(X) = \sum_{m=0}^{+ \infty} a_m X^m$ be a series with coefficients in the valued field $(\mathbb{K}, |\cdot|)$ of characteristic different from $2$ and $M \in \widehat{\Omega}_{f, \mathbb{K}} \setminus \{0\}$ be a semisimple matrix with splitting bound  at most $2$ over $\mathbb{K}$ and with fine Frobenius decomposition:

$
M = \sum_{i=0}^s \gamma_i \mathcal{A}_i(M) + \sum_{j=1}^t \dfrac{H_\mathbb{K}(\lambda_j)}{V_\mathbb{K}(\lambda_j)^2} \mathcal{B}_{j}(M)^2 + \sum_{j=1}^t \mathcal{B}_{j}(M)
$
as in \ref{dec-fine-unica} and in \ref{oss-dopo-dec-fine-unica} (b).

Then
\begin{center}
$
f(M) = \sum_{i=0}^s f(\gamma_i) \mathcal{A}_i(M) + \sum_{j=1}^t [ \dfrac{\mathcal{R}f(\lambda_j)}{V_\mathbb{K}(\lambda_j)^2} \, \mathcal{B}_j(M)^2  + \sqrt{-1} \, \dfrac{\mathcal{I}f(\lambda_j)}{V_\mathbb{K}(\lambda_j)} \mathcal{B}_j(M)],
$
\end{center}

with $f(\gamma_i), \ \dfrac{\mathcal{R}f(\lambda_j)}{V_\mathbb{K}(\lambda_j)^2}, \ \sqrt{-1} \, \dfrac{\mathcal{I}f(\lambda_j)}{V_\mathbb{K}(\lambda_j)} \ \in \mathbb{K}^c$ for every $i,j$.
\end{prop}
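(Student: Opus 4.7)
The plan is to verify the identity on each eigenspace of $M$ in $(\overline{\mathbb{K}^c})^n$ and conclude by semisimplicity. For each non-$\mathbb{K}$ eigenvalue $\lambda_j=\alpha_j+\beta_j$ with $\alpha_j=H_\mathbb{K}(\lambda_j)\in\mathbb{K}$ and $\beta_j=V_\mathbb{K}(\lambda_j)$, Proposition~\ref{car-non-2}(e) gives $\beta_j^2\in\mathbb{K}$. Splitting the binomial expansion of $\lambda_j^m=(\alpha_j+\beta_j)^m$ into its even- and odd-$\beta_j$-power parts yields $\lambda_j^m=P_m+\beta_j\,Q_m$ with $P_m,Q_m\in\mathbb{K}$ polynomials in $\alpha_j$ and $\beta_j^2$. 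Summing against $a_m$ recovers $\mathcal{R}f(\lambda_j)=\sum_m a_m P_m$ and (after reindexing the definition in \ref{recall-valued-fields}(b)) $\sqrt{-1}\,\mathcal{I}f(\lambda_j)=\beta_j\sum_m a_m Q_m$; in particular both $\mathcal{R}f(\lambda_j)/\beta_j^2$ and $\sqrt{-1}\,\mathcal{I}f(\lambda_j)/\beta_j$ lie in $\mathbb{K}^c$, and trivially $f(\gamma_i)\in\mathbb{K}^c$ since $\gamma_i\in\mathbb{K}$.

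Convergence of these tail series is guaranteed by the hypothesis $M\in\widehat{\Omega}_{f,\mathbb{K}}$: the crude estimates $|P_m|,|\beta_j Q_m|\le(|\alpha_j|+|\beta_j|)^m$ in the archimedean case and $|P_m|,|\beta_j Q_m|\le\max(|\alpha_j|,|\beta_j|)^m$ in the non-archimedean case dominate both series by a convergent majorant of the form $\sum|a_m|\rho^m$ with $\rho<R_f$. The inclusion $\widehat{\Omega}_{f,\mathbb{K}}\subseteq\Omega_{f,\mathbb{K}}$ moreover ensures that $f(M)$ itself converges in $M_n(\mathbb{K}^c)$.

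With the coefficients justified, I would use the orthogonality and annihilation relations of the fine Frobenius covariants (Definition~\ref{def-fine-Frob-Dec}) together with the description of the eigenspaces in the lemma preceding~\ref{dec-fine-unica} to evaluate both sides on a vector $v$ lying in a single eigenspace. If $v\in Ker(M-\gamma_i I_n)$, only $\mathcal{A}_i(M)$ acts nontrivially (as the identity), so both sides equal $f(\gamma_i)v$. If $v\in Ker(M-\lambda_j I_n)=Ker(\mathcal{B}_j(M)-\beta_j I_n)$, then $\mathcal{B}_j(M)v=\beta_j v$ and $\mathcal{B}_j(M)^2 v=\beta_j^2 v$ while all other covariants kill $v$, so the right-hand side collapses to $[\mathcal{R}f(\lambda_j)+\sqrt{-1}\,\mathcal{I}f(\lambda_j)]v=f(\lambda_j)v$ by the identity in~\ref{recall-valued-fields}(b); on $Ker(M-\overline{\lambda_j}I_n)$ the $\mathcal{B}_j(M)$-eigenvalue switches sign and the same identity applied to $\overline{\lambda_j}=\alpha_j-\beta_j$ produces $f(\overline{\lambda_j})v$. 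Since $M$ is semisimple, these eigenspaces span $(\overline{\mathbb{K}^c})^n$, so the two matrices coincide.

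The only delicate point I foresee is the convergence-bookkeeping — the legitimacy of interchanging the limit defining $f(M)$ with the binomial regrouping of the eigenvalues — which is precisely what the refined domain $\widehat{\Omega}_{f,\mathbb{K}}$ was designed to handle; once this is in place, the argument reduces to matching two semisimple operators on a common spanning set of eigenvectors.
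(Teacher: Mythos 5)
Your proof is correct, but it takes a genuinely different route from the paper's. The paper argues entirely at the matrix level: starting from the relation $\mathcal{B}_j(M)^3 = -n_j\mathcal{B}_j(M)$ it derives closed forms for $\mathcal{B}_j(M)^{2k}$ and $\mathcal{B}_j(M)^{2k-1}$, expands $\bigl[\tfrac{H_\mathbb{K}(\lambda_j)}{V_\mathbb{K}(\lambda_j)^2}\mathcal{B}_j(M)^2 + \mathcal{B}_j(M)\bigr]^m$ term by term, and then sums $\sum_m a_m(\cdot)^m$, regrouping into the $\mathcal{B}_j(M)^2$-- and $\mathcal{B}_j(M)$-- coefficient series that are recognized as $\mathcal{R}f(\lambda_j)$ and $\mathcal{I}f(\lambda_j)$. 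You instead diagonalize: you verify the identity vector by vector on a basis of eigenvectors of $M$, using the eigenspace description of the covariants from the lemma preceding Proposition~\ref{dec-fine-unica} together with the scalar identity $f(\lambda)=\mathcal{R}f(\lambda)+\sqrt{-1}\,\mathcal{I}f(\lambda)$ from \ref{recall-valued-fields}(b) (and its sign flip at $\overline{\lambda}_j$), then conclude by semisimplicity. Your route is shorter and more conceptual, pushing all convergence to the scalar level; the paper's is more explicit and produces the coefficient series by hand rather than invoking the eigenspace structure. One small remark: the ``interchange of limit with regrouping'' you flag as the delicate point is actually the crux of the \emph{paper's} computation, not yours --- in your approach $f(M)v=f(\lambda)v$ follows directly from $M^mv=\lambda^mv$ with no regrouping, and the role of $\widehat{\Omega}_{f,\mathbb{K}}$ for you is simply to guarantee that $\mathcal{R}f(\lambda_j)$ and $\mathcal{I}f(\lambda_j)$ themselves converge so that the right-hand side is well defined in $M_n(\mathbb{K}^c)$.
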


\begin{proof}
We pose $\alpha_j = H_\mathbb{K}(\lambda_j)$, $n_j = - V_\mathbb{K}(\lambda_j)^2$ , so that we can write: 

$\lambda_j = \alpha_j + \sqrt{- n_j}$.

From $\mathcal{B}_j(M)^3 = - n_j \mathcal{B}_j(M)$ we get:

$\mathcal{B}_j(M)^{2k} = -(n_j)^{k-1} \mathcal{B}_j(M)^2$ and
$\mathcal{B}_j(M)^{2k-1} = -(n_j)^{k-1} \mathcal{B}_j(M)$ for every $k \ge 1$.

Therefore standard computations allow to get, for every $m \ge 1$:
\begin{multline*}
[ \dfrac{H_\mathbb{K}(\lambda_j)}{V_\mathbb{K}(\lambda_j)^2} \mathcal{B}_j(M)^2 + \mathcal{B}_j(M)]^m =\\
[\sum_{h=0}^{\lfloor m/2 \rfloor} \dfrac{{m \choose 2h} \alpha_j^{m-2h} (\sqrt{-n_j})^{2h}}{-n_j} ] \mathcal{B}_j(M)^2+\\
[\sum_{h=1}^{\lfloor (m+1)/2 \rfloor} \dfrac{{m \choose 2h-1} \alpha_j^{m-2h+1} (\sqrt{-n_j})^{2h-1}}{\sqrt{-n_j}}]\mathcal{B}_j(M).
\end{multline*}

Hence:
$$
f(M) = a_o I_n + \sum_{m=1}^{+ \infty} a_m [\sum_{i=0}^s \gamma_i \mathcal{A}_i(M) + \sum_{j=1}^t (\dfrac{\alpha_j}{-n_j} \mathcal{B}_j(M)^2 + \mathcal{B}_j(M))]^m.
$$
Remembering the properties of the various matrices we get that the last is equal to
\begin{multline*}
a_o I_n + \sum_{m=1}^{+ \infty} a_m [\sum_{i=0}^s \gamma_i^m \mathcal{A}_i(M)] + \sum_{m=1}^{+ \infty} a_m  [\dfrac{\alpha_j}{-n_j} \mathcal{B}_j(M)^2 + \mathcal{B}_j(M)]^m =\\
a_0 [I_n - \sum_{i=0}^s \mathcal{A}_i(M) + \sum_{j=1}^t \dfrac{\mathcal{B}_i(M)^2}{n_j}] + \sum_{i=0}^s f(\gamma_i) \mathcal{A}_i(M) + \\
\sum_{j=1}^t[\dfrac{\sum_{m=0}^{+\infty} a_m \sum_{h=0}^{\lfloor m/2\rfloor} {m \choose 2h} \alpha_j^{m-2h} \sqrt{-n_j}^{2h}}{-n_j}] \mathcal{B}_j(M)^2 + \\
\sum_{j=1}^t[\dfrac{\sum_{m=1}^{+\infty} a_m \sum_{h=1}^{\lfloor (m+1)/2\rfloor} {m \choose 2h-1} \alpha_j^{m-2h+1} \sqrt{-n_j}^{2h-1}}{\sqrt{-n_j}}] \mathcal{B}_j(M).
\end{multline*}
Now, remembering \ref{oss-dopo-dec-fine-unica} (b) and the definitions of the various matrices, we get the expression of $f(M)$ in the statement.

Note that the expressions of $\mathcal{R}f(\lambda_j)$ and of $\dfrac{\mathcal{I}f(\lambda_j)}{V_\mathbb{K}(\lambda_j)}$ are invariant up to change $\lambda_j$ with $\overline{\lambda}_j$.
\end{proof}

\begin{examples}\label{exp-cos'}
Assume that $(\mathbb{K}, |\cdot |)$ is a valued field with $char(\mathbb{K}) =0$. Then the restriction to fundamental field $\mathbb{Q}$ of  $|\cdot |$ is equivalent either to the usual euclidean absolute value, or to the trivial absolute value, or to a p-adic absolute value for some prime number $p$ (see for instance \cite{Lor2008} Ch.\,23, Thm.\,1). 

In correspondence to this three cases, we call $(\mathbb{K}, |\cdot |)$ a valued field of \emph{archimedean type}, or of \emph{trivial type}, or of \emph{p-adic type}, respectively.

In all cases we can define as power series, as in ordinary real case, the exponential function, the sinus, the cosinus,  the hyperbolic sinus and the hyperbolic cosinus. These series have the same radius of convergence: $R=1$ if the absolute value is of trivial type, $R= +\infty$ if the absolute value is of archimedean type and  $R= (\dfrac{1}{p})^{^{\frac{1}{p-1}}}$ if the absolute value is of p-adic type (see for instance \cite{Schik1984} pp.70--72).

We set $\widehat{\Omega}_{\mathbb{K}} = \widehat{\Omega}_{exp, \mathbb{K}}$.

Let $\lambda = H_\mathbb{K}(\lambda) + V_\mathbb{K}(\lambda) \in \overline{\mathbb{K}}$ with its $\mathbb{K}$-decomposition. Then for $f(\lambda) = \exp(\lambda)$ we have

$
\mathcal{R} f(\lambda) = \exp(H_\mathbb{K}(\lambda) ) \cosh(V_\mathbb{K}(\lambda) )
$ and 
$
\mathcal{I} f(\lambda) = \dfrac{\exp(H_\mathbb{K}(\lambda) ) \sinh(V_\mathbb{K}(\lambda) )} {\sqrt{-1}}
$.

So: $\exp(\lambda) = \exp(H_\mathbb{K}(\lambda) ) [\cosh(V_\mathbb{K}(\lambda) ) + \sinh(V_\mathbb{K}(\lambda) )]$.

Now if $M \in \widehat{\Omega}_{\mathbb{K}} \setminus \{0\}$ is semisimple and has splitting bound at most $2$ over $\mathbb{K}$, then from the previous Proposition we get:
\begin{multline*}
\exp(M)= \sum_{i=0}^s \exp(\gamma_i) \mathcal{A}_i(M) + \sum_{j=1}^t \dfrac{\exp(H_\mathbb{K}(\lambda_j)) \cosh(V_\mathbb{K}(\lambda_j) )}{V_\mathbb{K}(\lambda_j)^2} \mathcal{B}_j(M)^2 + \\
\sum_{j=1}^t \dfrac{\exp(H_\mathbb{K}(\lambda_j)) \sinh(V_\mathbb{K}(\lambda_j))}{V_\mathbb{K}(\lambda_j)} \mathcal{B}_j(M).
\end{multline*}

Analogously we can get the formulas for $\cos(M)$ and $\sin(M)$; for instance 

if $M \in \widehat{\Omega}_{\mathbb{K}} \setminus \{0\}$ is semisimple and has splitting bound at most $2$ over $\mathbb{K}$, then 
\begin{multline*}
\cos(M)= \sum_{i=0}^s \cos(\gamma_i) \mathcal{A}_i(M) + \sum_{j=1}^t \dfrac{\cos(H_\mathbb{K}(\lambda_j)) \cos(V_\mathbb{K}(\lambda_j))}{V_\mathbb{K}(\lambda_j)^2} \mathcal{B}_j(M)^2 + \\
\sum_{j=1}^t \dfrac{\sin(H_\mathbb{K}(\lambda_j)) \sin(H_\mathbb{K}(\lambda_j))}{V_\mathbb{K}(\lambda_j)} \mathcal{B}_j(M).
\end{multline*}
\end{examples}

\begin{cor}\label{CAJC-dec-f-M}
Let $f(X) = \sum_{m=0}^{+ \infty} a_m X^m$ be a series with coefficients in the valued field $(\mathbb{K}, |\cdot|)$ of characteristic different from $2$ and $M \in \widehat{\Omega}_{f, \mathbb{K}} \setminus \{0\}$ be a semisimple matrix with splitting bound at most $2$ over $\mathbb{K}$ and with fine Frobenius decomposition

$
M = \sum_{i=0}^s \gamma_i \mathcal{A}_i(M) + \sum_{j=1}^t \dfrac{H_\mathbb{K}(\lambda_j)}{V_\mathbb{K}(\lambda_j)^2} \mathcal{B}_{j}(M)^2 + \sum_{j=1}^t \mathcal{B}_{j}(M)
$. 

Then
$f(M)$ is semisimple with spitting bound at most $2$ over $\mathbb{K}^c$
and its complete additive Jordan-Chevalley decomposition over $\mathbb{K}^c$ is 
\begin{center}
$f(M) = H(f(M)) + V(f(M))$
\end{center}

where

$H(f(M))= \sum_{i=0}^s f(\gamma_i) \mathcal{A}_i(M)  
+ \sum_{j=1}^t \mathcal{R}f (\lambda_j) \dfrac{\mathcal{B}_j(M)^2}{V_\mathbb{K}(\lambda_j)^2} 
$ 

whose (possibly repeated) eigenvalues are $f(\gamma_i)$ and $\mathcal{R}f (\lambda_j)$ for every admissible $i,j$

$V(f(M))= \sqrt{-1}\,\sum_{j=1}^t \dfrac{\mathcal{I}f(\lambda_j)}{V_\mathbb{K}(\lambda_j)} \mathcal{B}_j(M)$ 

whose (possibly repeated) eigenvalues are $\pm \sqrt{-1}\,\mathcal{I}f(\lambda_j)$ and possibly $0$.
\end{cor}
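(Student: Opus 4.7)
My starting point is the explicit formula for $f(M)$ given by Proposition \ref{decf(M)}, which I would split as $f(M) = H + V$ with
\[
H = \sum_{i=0}^s f(\gamma_i)\,\mathcal{A}_i(M) + \sum_{j=1}^t \dfrac{\mathcal{R}f(\lambda_j)}{V_\mathbb{K}(\lambda_j)^2}\,\mathcal{B}_j(M)^2, \qquad V = \sqrt{-1}\sum_{j=1}^t \dfrac{\mathcal{I}f(\lambda_j)}{V_\mathbb{K}(\lambda_j)}\,\mathcal{B}_j(M).
\]
The plan is to verify directly that $f(M) = H + V + 0$ satisfies the four conditions of Definition \ref{Def-CAJC-Dec} over $\mathbb{K}^c$: mutually commuting matrices, $H$ diagonalizable over $\mathbb{K}^c$, $V$ semisimple and $\mathbb{K}^c$-vertical, and nilpotent part equal to zero. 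Uniqueness of such a decomposition will then follow from Theorem \ref{Delta-Sigma-N} applied over $\mathbb{K}^c$, once $\mathbb{K}^c$-regularity of $f(M)$ is established.

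Commutativity of $H$ and $V$, and semisimplicity of each, both come directly from the orthogonality relations of Definition \ref{def-fine-Frob-Dec} combined with the cubic identity $\mathcal{B}_j(M)^3 = V_\mathbb{K}(\lambda_j)^2\, \mathcal{B}_j(M)$. Concretely, $H$ is a $\mathbb{K}^c$-linear combination of the mutually orthogonal idempotents $\mathcal{A}_i(M)$ and $\mathcal{B}_j(M)^2/V_\mathbb{K}(\lambda_j)^2$ (see Remark \ref{oss-dopo-dec-fine-unica}(a)), so it is diagonalizable over $\mathbb{K}^c$ with (possibly repeated) eigenvalues $f(\gamma_i),\,\mathcal{R}f(\lambda_j) \in \mathbb{K}^c$. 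Each $\mathcal{B}_j(M)$ is annihilated by $X(X^2 - V_\mathbb{K}(\lambda_j)^2)$, hence semisimple with eigenvalues $0,\,\pm V_\mathbb{K}(\lambda_j)$, and the pairwise orthogonality $\mathcal{B}_j\mathcal{B}_l = 0$ for $j \ne l$ then makes $V$ itself semisimple with (possibly repeated) eigenvalues $0$ and $\pm\sqrt{-1}\,\mathcal{I}f(\lambda_j)$, as claimed.

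The step I expect to be the main obstacle is showing that $V$ is genuinely $\mathbb{K}^c$-vertical, i.e.\ that each nonzero eigenvalue $\pm\sqrt{-1}\,\mathcal{I}f(\lambda_j)$ lies in $Ker(H_{\mathbb{K}^c})$. Writing $\sqrt{-1}\,\mathcal{I}f(\lambda_j) = c_j\,V_\mathbb{K}(\lambda_j)$ with $c_j \in \mathbb{K}^c$ (the coefficient appearing in the statement), one computes its square as $c_j^2\,V_\mathbb{K}(\lambda_j)^2 \in \mathbb{K}^c$, and Proposition \ref{car-non-2}(e) applied over $\mathbb{K}^c$ yields the desired verticality whenever the element has degree exactly $2$ over $\mathbb{K}^c$. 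Once this is in place, $f(M)$ is semisimple (being the sum of two commuting semisimple matrices), and its eigenvalues $f(\gamma_i) \in \mathbb{K}^c$ and $f(\lambda_j) = \mathcal{R}f(\lambda_j) + \sqrt{-1}\,\mathcal{I}f(\lambda_j)$ have degree at most $2$ over $\mathbb{K}^c$, so the splitting bound of $f(M)$ over $\mathbb{K}^c$ is at most $2$ and $f(M)$ is $\mathbb{K}^c$-regular. Theorem \ref{Delta-Sigma-N}, invoked over $\mathbb{K}^c$, then identifies the decomposition $H + V + 0$ as \emph{the} complete additive Jordan-Chevalley decomposition of $f(M)$ over $\mathbb{K}^c$, which completes the proof.
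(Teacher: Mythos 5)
Your overall route matches the paper's intent exactly: the paper's entire proof is the single sentence ``It is a consequence of Proposition \ref{decf(M)} via ordinary computations,'' and the computations you outline (split $f(M)$ according to the formula from \ref{decf(M)}, verify mutual commutativity, diagonalizability of $H$ over $\mathbb{K}^c$, semisimplicity and verticality of $V$, then invoke uniqueness from Theorem \ref{Delta-Sigma-N} over $\mathbb{K}^c$) are exactly the ones being alluded to. The commutativity, semisimplicity and eigenvalue computations you supply are all correct, and correctly use the orthogonality relations and the cubic identity $\mathcal{B}_j(M)^3 = V_\mathbb{K}(\lambda_j)^2\,\mathcal{B}_j(M)$.

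There is, however, a genuine gap in the verticality step, which you have correctly isolated but then left open. You apply Proposition \ref{car-non-2}(e) to $c_j\,V_\mathbb{K}(\lambda_j)$ ``whenever the element has degree exactly $2$ over $\mathbb{K}^c$'' --- but nothing in the hypotheses guarantees this degree condition. An element of degree $2$ over $\mathbb{K}$ can drop to degree $1$ over the completion $\mathbb{K}^c$: take $\mathbb{K}=\mathbb{Q}$ with the $5$-adic absolute value, so that $\mathbb{K}^c=\mathbb{Q}_5$ contains $\sqrt{-1}$, and a matrix $M$ with eigenvalue $\lambda_1=\sqrt{-1}$; then for $f(X)=X$ your $V$ reduces to $\sum_j\mathcal{B}_j(M)$, whose nonzero eigenvalues $\pm\sqrt{-1}$ lie in $\mathbb{K}^c$ and therefore are \emph{not} in $Ker(H_{\mathbb{K}^c})$. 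In that situation $V$ fails to be $\mathbb{K}^c$-vertical and the displayed pair $(H,V)$ is not the complete additive Jordan--Chevalley decomposition of $f(M)$ over $\mathbb{K}^c$ (which would instead be $H^c=f(M)$, $V^c=0$). This is not something you can fix inside the proof as written: it is an implicit extra hypothesis that the corollary's statement needs (namely that the $V_\mathbb{K}(\lambda_j)$ --- equivalently the $\lambda_j$ --- remain of degree $2$ over $\mathbb{K}^c$), and the paper's one-line ``ordinary computations'' simply glosses over it. Under that additional assumption your argument closes cleanly and gives a complete proof.
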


\begin{proof}
It is a consequence of \ref{decf(M)} via ordinary computations.
\end{proof}

\section{Ordered quadratically closed fields and real closed fields.}

\begin{defi}\label{defQCO}
Assume that $\mathbb{K}$ is an ordered field. We say that $\mathbb{K}$ is a \emph{ordered quadratically closed field}, if every positive element of $\mathbb{K}$ has a square root in $\mathbb{K}$. 

For every $a \in \mathbb{K}$, $a>0$ we denote by $\sqrt{a}$ the unique positive square root of $a$ in $\mathbb{K}$.

For this notion we follow \cite{Lang2002} Ch.\,IX p.462 rather than other definitions of quadratically closed field in literature.
\end{defi}

\begin{rem}
It is known that an ordered quadratically closed field has a unique order (as field) and it has characteristic $0$ (see for instance \cite{Raj1993} Thm.\,15.3). Hence, from \ref{car0sommadiretta}, we have the canonical decomposition $\overline{\mathbb{K}} = \mathbb{K} \oplus Ker(H_\mathbb{K})$ as $\mathbb{K}$-vector space.
\end{rem}

\begin{defi}\label{defRC}
The field $\mathbb{K}$ is said to be a \emph{real closed field}, if it can be endowed with a structure of ordered field such that its positive elements have a square root in $\mathbb{K}$ and any polynomial of odd degree of $\mathbb{K}[X]$ has a root in $\mathbb{K}$.
\end{defi}

\begin{rem}
It follows directly from the definitions that every real closed field is an ordered quadratically closed field.

It is known that, for every ordered field $\mathbb{K}$, there exists an algebraic extension, contained in $\overline{\mathbb{K}}$, which is real closed and whose order extends the order of $\mathbb{K}$ and, moreover, that any two such extensions are $\mathbb{K}$-isomorphic (see for instance \cite{Jac1989} Thm.\,11.4 or \cite{Raj1993} Thm.\,15.9).

We call any such extension $\mathbb{L}$ a \emph{real closure of the ordered field} $\mathbb{K}$ \emph{in} $\overline{\mathbb{K}}$.

Note that $\overline{\mathbb{K}}$ is the algebraic closure of $\mathbb{L}$ too.
\end{rem}

\begin{rem}\label{real-closed}
For more information, further characterizations and properties of real closed fields we refer for instance to \cite{Lang2002} Ch.XI \S2 and to \cite{Raj1993} Ch.\,15.

In particular it is known that $\mathbb{K}$ is a real closed field if and only if  $\sqrt{-1} \notin \mathbb{K}$ and $\mathbb{K}(\sqrt{-1})$ is algebraically closed (see for instance \cite{Raj1993} p.\,221 (i)).

In \ref{real-closed-equiv} we point out other simple characterizations of real closed fields, useful in our setting.
\end{rem}

\begin{prop}\label{parti-Re-Im}
Assume that $\mathbb{K}$ is an ordered quadratically closed field, choose one of its real closures, $\mathbb{L}$, in $\overline{\mathbb{K}}$ and a square root $\sqrt{-1} \in \overline{\mathbb{K}}$ of $-1$. 

a) For every element $z \in \overline{\mathbb{K}}$ there exist, uniquely determined by $\mathbb{L}$ and $\sqrt{-1}$, elements $x, y \in \mathbb{L}$ such that $z = x + \sqrt{-1}\,y$. 

We denote $x=\mathbf{Re}(z)$ and $y = \mathbf{Im}(z)$: the \emph{real} and the \emph{imaginary} part of $z$ (depending on $\mathbb{L}$ and $\sqrt{-1}$).

b) For every element $z \in \overline{\mathbb{K}}$ of degree $2$ over $\mathbb{K}$, $\mathbf{Re}(z)$ and $\mathbf{Im}(z)$ are both elements of $\mathbb{K}$ and moreover $H_\mathbb{K} (z) = \mathbf{Re}(z)$ and $V_\mathbb{K} (z) = \sqrt{-1}\,\mathbf{Im}(z)$; hence, in this case, $\mathbf{Re}(z)$ and $\mathbf{Im}(z)$ are independent of $\mathbb{L}$.
\end{prop}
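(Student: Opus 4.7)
The plan is to exploit the characterization of real closed fields recalled in Remark \ref{real-closed}, together with Proposition \ref{car-non-2}.

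For part (a), first I would observe that since $\mathbb{L}$ is real closed, $\sqrt{-1}\notin\mathbb{L}$ and $\mathbb{L}(\sqrt{-1})$ is algebraically closed. Because the chosen $\sqrt{-1}$ lies in $\overline{\mathbb{K}}$ and $\mathbb{L}\subseteq\overline{\mathbb{K}}$, we get $\mathbb{L}(\sqrt{-1})\subseteq\overline{\mathbb{K}}$; and since $\overline{\mathbb{K}}$ is algebraic over $\mathbb{K}\subseteq\mathbb{L}(\sqrt{-1})$ while $\mathbb{L}(\sqrt{-1})$ is algebraically closed, this forces $\overline{\mathbb{K}}=\mathbb{L}(\sqrt{-1})$. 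Then $[\mathbb{L}(\sqrt{-1}):\mathbb{L}]=2$ with $\mathbb{L}$-basis $\{1,\sqrt{-1}\}$, which yields for every $z\in\overline{\mathbb{K}}$ the unique representation $z=x+\sqrt{-1}\,y$ with $x,y\in\mathbb{L}$.

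For part (b), let $z\in\overline{\mathbb{K}}$ have degree $2$ over $\mathbb{K}$, so in particular $z\notin\mathbb{K}$ and $V_\mathbb{K}(z)\ne 0$. The key step is to show $V_\mathbb{K}(z)\in\sqrt{-1}\,\mathbb{K}$. By Proposition \ref{car-non-2}(e), $V_\mathbb{K}(z)^2\in\mathbb{K}$ while $V_\mathbb{K}(z)\notin\mathbb{K}$. I would then rule out $V_\mathbb{K}(z)^2\ge 0$: if $V_\mathbb{K}(z)^2$ were a non-negative element of the ordered quadratically closed field $\mathbb{K}$, it would admit a square root in $\mathbb{K}$, forcing $V_\mathbb{K}(z)\in\mathbb{K}$, a contradiction. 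Hence $V_\mathbb{K}(z)^2<0$ in $\mathbb{K}$, and setting $a=\sqrt{-V_\mathbb{K}(z)^2}\in\mathbb{K}$ we obtain $V_\mathbb{K}(z)=\pm\sqrt{-1}\,a$. Combining with $H_\mathbb{K}(z)\in\mathbb{K}\subseteq\mathbb{L}$ gives $z=H_\mathbb{K}(z)\pm\sqrt{-1}\,a$, and uniqueness from part (a) identifies $\mathbf{Re}(z)=H_\mathbb{K}(z)$ and $\mathbf{Im}(z)=\pm a$, both in $\mathbb{K}$, so that $V_\mathbb{K}(z)=\sqrt{-1}\,\mathbf{Im}(z)$. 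Independence from $\mathbb{L}$ then follows from the intrinsic definitions of $H_\mathbb{K}$ and $V_\mathbb{K}$.

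The main (minor) obstacle I expect is the sign dichotomy in (b): one must use \emph{both} the quadratic closedness of $\mathbb{K}$ and the degree-$2$ hypothesis on $z$ to exclude $V_\mathbb{K}(z)^2\ge 0$; everything else follows routinely from the real-closed-field characterization in Remark \ref{real-closed} and the uniqueness established in (a).
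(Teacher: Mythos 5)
Your proof is correct and follows essentially the same route as the paper's: for (a), identify $\overline{\mathbb{K}}=\mathbb{L}(\sqrt{-1})$ via the real-closed characterization in Remark \ref{real-closed} and use the degree-$2$ extension; for (b), use the $\mathbb{K}$-decomposition $z=H_\mathbb{K}(z)+V_\mathbb{K}(z)$, show $-V_\mathbb{K}(z)^2>0$ (the paper phrases this as irreducibility of $X^2-V_\mathbb{K}(z)^2$ over $\mathbb{K}$, while you argue directly by contradiction from quadratic closedness, an equivalent step), and conclude by the uniqueness from (a).
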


\begin{proof}
Part (a) follows from \ref{real-closed} since $\overline{\mathbb{K}} = \mathbb{L}(\sqrt{-1})$.

Let $z \in \overline{\mathbb{K}}$ as in (b).
From \ref{decomp-traccia}, we can write $z= H_\mathbb{K} (z) + V_\mathbb{K} (z)$ with $V_\mathbb{K} (z)$ root of the polynomial $X^2 - V_{\mathbb{K}}(z)^2$, which is irreducible over $\mathbb{K}$, since $z$ and $V_{\mathbb{K}}(z)$ have degree $2$ over $\mathbb{K}$. Hence  $- V_{\mathbb{K}}(z)^2 >0$ and so $\pm \sqrt{- V_{\mathbb{K}}(z)^2}$ are both elements of the ordered quadratically closed field $\mathbb{K}$. Now $V_\mathbb{K} (z) = \sqrt{-1} \,[\pm \sqrt{- V_{\mathbb{K}}(z)^2}]$ and we conclude (b) by uniqueness of the decomposition in (a).
\end{proof}

\begin{lemma}
Let $(\mathbb{K}, |.|)$ be a valued field  and $\lambda \in \overline{\mathbb{K}} \subseteq \overline{\mathbb{K}^c}$.
Then $(\mathbb{K}(\lambda))^c = \mathbb{K}^c(\lambda)$.
\end{lemma}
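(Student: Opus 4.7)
The plan is to use the standard fact that any finite extension of a complete valued field is itself complete with respect to the (unique) extension of the absolute value, and then to identify $\mathbb{K}^c(\lambda)$ with the completion of $\mathbb{K}(\lambda)$ by a density argument.

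First, since $\lambda\in\overline{\mathbb{K}}$ is algebraic over $\mathbb{K}$, it is also algebraic over $\mathbb{K}^c$, so $\mathbb{K}^c(\lambda)$ is a finite extension of $\mathbb{K}^c$. As $\mathbb{K}^c$ is complete and the absolute value on $\overline{\mathbb{K}^c}$ restricts uniquely to $\mathbb{K}^c(\lambda)$, the field $\mathbb{K}^c(\lambda)$ is complete with respect to the restriction of $|\cdot|$ (cf. \cite{Lor2008} Thm.\,2 p.\,48 together with the standard result that finite extensions of complete fields are complete). Moreover, both $\mathbb{K}(\lambda)$ and $\mathbb{K}^c(\lambda)$ sit inside the complete field $\overline{\mathbb{K}^c}^c$ carrying compatible absolute values, so we can compare them inside this ambient space.

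Next, I would show that $\mathbb{K}(\lambda)$ is dense in $\mathbb{K}^c(\lambda)$. Let $e=[\mathbb{K}^c(\lambda):\mathbb{K}^c]$; then $\{1,\lambda,\dots,\lambda^{e-1}\}$ is a $\mathbb{K}^c$-basis of $\mathbb{K}^c(\lambda)$. Any $z\in\mathbb{K}^c(\lambda)$ writes as $z=\sum_{i=0}^{e-1} a_i\lambda^i$ with $a_i\in\mathbb{K}^c$. Using the density of $\mathbb{K}$ in $\mathbb{K}^c$, pick sequences $b_i^{(k)}\in\mathbb{K}$ with $b_i^{(k)}\to a_i$; then $\sum_{i=0}^{e-1} b_i^{(k)}\lambda^i\in\mathbb{K}(\lambda)$ approximates $z$ in the topology of $\mathbb{K}^c(\lambda)$, since all norms on the finite-dimensional $\mathbb{K}^c$-vector space $\mathbb{K}^c(\lambda)$ are equivalent (or, more concretely, by the triangle inequality one has $|z-\sum b_i^{(k)}\lambda^i|\le \sum_i |a_i-b_i^{(k)}|\,|\lambda|^i\to 0$).

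Combining the two pieces: $\mathbb{K}^c(\lambda)$ is a complete field containing $\mathbb{K}(\lambda)$ as a dense subfield, so by the universal property (or uniqueness) of completion it coincides with $\mathbb{K}(\lambda)^c$. The only delicate point, and what I would be most careful about, is to be sure we are working throughout with the \emph{same} absolute value on all the fields involved: namely, the one obtained by restriction from the canonical extension of $|\cdot|$ to $\overline{\mathbb{K}^c}$. Once this is fixed, the density argument and the completeness of finite extensions of $\mathbb{K}^c$ yield the claimed equality $(\mathbb{K}(\lambda))^c=\mathbb{K}^c(\lambda)$.
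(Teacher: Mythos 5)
Your proof is correct and follows essentially the same route as the paper's: both first invoke the completeness of $\mathbb{K}^c(\lambda)$ (a finite extension of the complete field $\mathbb{K}^c$, cf.\ \cite{Lang2002} Ch.\,XII) and then show density of $\mathbb{K}(\lambda)$ in $\mathbb{K}^c(\lambda)$ by approximating each coordinate in the basis $\{1,\lambda,\dots,\lambda^{e-1}\}$ with elements of $\mathbb{K}$. The only cosmetic difference is that the paper phrases the conclusion as two separate inclusions rather than as ``dense subfield of a complete field,'' and your version more cleanly states $e=[\mathbb{K}^c(\lambda):\mathbb{K}^c]$ where the paper's wording is slightly loose.
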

\begin{proof}
The element $\lambda$ is algebraic over $\mathbb{K}^c$ too. Hence, by \cite{Lang2002} Ch.\,XII Prop. 2.5, we get that $\mathbb{K}^c(\lambda)$ is complete. Since it contains $\mathbb{K}(\lambda)$, it contains also its completions and this gives an inclusion.

Now let $\vartheta \in \mathbb{K}^c(\lambda)$ be of the degree $l$ over $\mathbb{K}^c$. Therefore $\vartheta = \sum_{i=0}^{l-1} h_i \lambda^i$ with $h_0, \cdots , h_{l-1} \in \mathbb{K}^c$. Since $\mathbb{K}$ is dense in $\mathbb{K}^c$, there exist sequences in $\mathbb{K}$, $\{k_m^{(i)}\}_{m \in \mathbb{N}}$, $0 \le i \le l-1$, such that $k_m^{(i)}$ converges to $h_i$.
Since the topology over $\mathbb{K}^c(\lambda)$ is the product topology (see \cite{Lang2002} Ch.\,XII, Prop.\,2.2), we have that 

$k_m^{(0)} + k_m^{(1)} \lambda + \cdots + k_m^{(l-1)} \lambda^{l-1}$ is a sequence in $\mathbb{K}(\lambda)$ which converges to $\vartheta$. So $\vartheta \in (\mathbb{K}(\lambda))^c$.
\end{proof}

\begin{prop}\label{completamento-RCVal}
Let $(\mathbb{K}, |.|)$ be a real closed valued field and denote by $\mathbb{K}^c$ its completion.

If $\sqrt{-1} \in \mathbb{K}^c$, then $\mathbb{K}^c$ is algebraically closed.

If $\sqrt{-1} \notin \mathbb{K}^c$, the $\mathbb{K}^c$ is real closed.
\end{prop}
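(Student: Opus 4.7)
My plan is to apply the preceding lemma with $\lambda = \sqrt{-1}$, combined with Lemma~\ref{completamento}(a) and the characterization of real closed fields recalled in Remark~\ref{real-closed}. Since $\mathbb{K}$ is real closed, Remark~\ref{real-closed} gives $\overline{\mathbb{K}} = \mathbb{K}(\sqrt{-1})$, and this field is algebraically closed. The preceding lemma then identifies
\[
(\mathbb{K}(\sqrt{-1}))^c = \mathbb{K}^c(\sqrt{-1}),
\]
and by Lemma~\ref{completamento}(a), applied to the algebraically closed valued field $\overline{\mathbb{K}} = \mathbb{K}(\sqrt{-1})$, the field $\mathbb{K}^c(\sqrt{-1})$ is itself algebraically closed. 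This single identification carries the whole argument.

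If $\sqrt{-1} \in \mathbb{K}^c$, then $\mathbb{K}^c(\sqrt{-1}) = \mathbb{K}^c$, so $\mathbb{K}^c$ is algebraically closed and the first claim is immediate.

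If $\sqrt{-1} \notin \mathbb{K}^c$, then $\mathbb{K}^c(\sqrt{-1})$ is a proper algebraically closed extension of $\mathbb{K}^c$ of degree $2$. I will invoke the Artin--Schreier-type criterion recorded in Remark~\ref{real-closed} (a field $F$ with $\sqrt{-1} \notin F$ and $F(\sqrt{-1})$ algebraically closed is automatically real closed), applied to $F = \mathbb{K}^c$, to conclude that $\mathbb{K}^c$ is real closed.

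I do not anticipate a genuine obstacle here: the proof is essentially a chaining of results already available (the preceding lemma, Lemma~\ref{completamento}(a), and the characterization in Remark~\ref{real-closed}). The only point worth verifying is that the preceding lemma really applies to $\lambda = \sqrt{-1}$, which only requires $\sqrt{-1} \in \overline{\mathbb{K}}$; this holds because $\sqrt{-1}$ is a root of $X^2 + 1 \in \mathbb{K}[X]$, and the absolute value of $\mathbb{K}$ has already been extended to $\overline{\mathbb{K}}$ in the opening remark of \S 4.
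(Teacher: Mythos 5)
Your proof is correct and follows essentially the same route as the paper's: you use the preceding lemma to identify $(\mathbb{K}(\sqrt{-1}))^c = \mathbb{K}^c(\sqrt{-1})$, invoke Lemma~\ref{completamento}(a) to deduce that this field is algebraically closed, and then split on whether $\sqrt{-1}\in\mathbb{K}^c$, using the Artin--Schreier criterion from Remark~\ref{real-closed} for the real closed case. No gaps.
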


\begin{proof}
By the previous Lemma we have: $\overline{\mathbb{K}}^c = \mathbb{K}(\sqrt{-1})^c = \mathbb{K}^c (\sqrt{-1})$. 

Since the completion of an algebraically closed field is algebraically closed too (remember \ref{completamento} (a)), $\mathbb{K}^c (\sqrt{-1})$ is algebraically closed.

Hence if $\sqrt{-1} \in \mathbb{K}^c$, then $\mathbb{K}^c$ is algebraically closed. Otherwise $\mathbb{K}^c$ is real closed by the characterization recalled in \ref{real-closed}. 
\end{proof}

\begin{cor}\label{cor-dopo-completamento}
If $(\mathbb{K}, |.|)$ be a real closed valued field of p-adic type for some prime $p$ (remember \ref{exp-cos'}), then its completion $\mathbb{K}^c$ is algebraically closed.
\end{cor}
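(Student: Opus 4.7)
The plan is to apply Proposition \ref{completamento-RCVal} and to rule out, by contradiction, the alternative that $\sqrt{-1}\notin\mathbb{K}^c$. Suppose therefore that $\sqrt{-1}\notin\mathbb{K}^c$. Then \ref{completamento-RCVal} forces $\mathbb{K}^c$ to be real closed, so $\mathbb{K}^c$ carries its unique field ordering, in which the positive elements are exactly the nonzero squares. This ordering restricts to the standard ordering of the prime subfield $\mathbb{Q}\subseteq\mathbb{K}\subseteq\mathbb{K}^c$, since squares of nonzero rationals are positive in the standard order. Consequently, no negative rational number can be a square in $\mathbb{K}^c$, and it will suffice to produce a negative rational that does admit a square root in $\mathbb{K}^c$.

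The next step is to identify a natural copy of $\mathbb{Q}_p$ inside $\mathbb{K}^c$. Because $\mathbb{K}$ is real closed we have $char(\mathbb{K})=0$ and hence $\mathbb{Q}\subseteq\mathbb{K}$. The $p$-adic type hypothesis (see \ref{exp-cos'}) tells us that the restriction of $|\cdot|$ to $\mathbb{Q}$ is equivalent to the usual $p$-adic absolute value $|\cdot|_p$; since equivalent absolute values share the same Cauchy and null sequences, the topological closure of $\mathbb{Q}$ inside $\mathbb{K}^c$ is a completion of $(\mathbb{Q},|\cdot|_p)$, hence is canonically isomorphic to $\mathbb{Q}_p$. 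This realises $\mathbb{Q}_p$ as a subfield of $\mathbb{K}^c$ via a map that is the identity on $\mathbb{Q}$.

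The decisive point, and in my view the main obstacle, is to exhibit a negative integer whose square root lies in $\mathbb{Q}_p$. For $p$ odd I would apply Hensel's lemma to $f(X)=X^2-(1-p)$ with approximate root $x_0=1$: one has $|f(1)|_p=|p|_p=1/p<1=|f'(1)|_p^{\,2}$ since $2$ is a $p$-adic unit, and Hensel therefore produces $\sqrt{1-p}\in\mathbb{Q}_p$; here $1-p<0$ as an integer. For $p=2$ I would instead invoke the standard characterisation of squares in $\mathbb{Q}_2^{\times}$ (a unit is a square iff it is congruent to $1$ modulo $8$) applied to $-7=1-8$, yielding $\sqrt{-7}\in\mathbb{Q}_2$, with $-7<0$. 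In either case a negative rational becomes a square in $\mathbb{Q}_p\subseteq\mathbb{K}^c$, directly contradicting the conclusion of the first paragraph.

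This contradiction shows that $\sqrt{-1}\in\mathbb{K}^c$, and the first clause of Proposition \ref{completamento-RCVal} then gives that $\mathbb{K}^c$ is algebraically closed, as required.
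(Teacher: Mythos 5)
Your proof is correct but takes a genuinely different route from the paper's. The paper argues directly: it considers the sequence $x_n=\sqrt{p^n-1}\in\mathbb{K}$ (which exists because $\mathbb{K}$ is ordered quadratically closed and $p^n-1>0$), notes that $(x_n-\sqrt{-1})(x_n+\sqrt{-1})=x_n^2+1=p^n\to 0$ in the $p$-adic-type absolute value, and then deduces by a simple dichotomy that a subsequence of $\pm x_n$ converges to $\sqrt{-1}$; hence $\sqrt{-1}\in\mathbb{K}^c$ and Proposition \ref{completamento-RCVal} finishes. You instead proceed by contradiction: assuming $\sqrt{-1}\notin\mathbb{K}^c$, Proposition \ref{completamento-RCVal} forces $\mathbb{K}^c$ to be real closed with its unique order (in which every square is nonnegative) restricting to the standard order on $\mathbb{Q}$; you then identify the closure of $\mathbb{Q}$ in $\mathbb{K}^c$ with $\mathbb{Q}_p$ and invoke Hensel's lemma (resp.\ the $2$-adic criterion for squares) to produce a negative rational --- $1-p$ for $p$ odd, $-7$ for $p=2$ --- that is a square in $\mathbb{Q}_p\subseteq\mathbb{K}^c$, contradicting the ordering. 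Both routes are sound; the trade-off is that the paper's argument is more elementary and self-contained (only the difference-of-squares factorization and the behaviour of $|p^n|$), while yours imports heavier machinery (Hensel's lemma, the classification of $2$-adic squares, uniqueness of the order on a real closed field) but makes the structural obstruction more transparent, namely that $\mathbb{Q}_p$ is not formally real and so cannot sit inside an ordered field.
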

\begin{proof}
Let us consider the sequence $\{x_n\}_{n \ge 1}$, $x_n=\sqrt{p^n -1}$. Since $\mathbb{K}$ is real closed (hence ordered and quadratically closed), $x_n \in \mathbb{K}$. Now $x_n^2 +1 = p^n \to 0$ in $(\mathbb{K}, |.|)$.

If there exists a subsequence $\{x_{n_k}\}$ such that $(x_{n_k} +\sqrt{-1}) \to 0$, then $\pm \sqrt{-1} \in \mathbb{K}^c$.

Otherwise there exists a real number $\delta >0$ such that $|x_n + \sqrt{-1}| \ge \delta >0$ for every $n$. In this case $|x_n - \sqrt{-1}| = \dfrac{|x_n^2 +1|}{|x_n + \sqrt{-1}|} \le \dfrac{|x_n^2 +1|}{\delta} \to 0$. So $x_n \to \sqrt{-1}$ and again $\sqrt{-1} \in  \mathbb{K}^c$. Hence, by the previous Proposition, $\mathbb{K}^c$ is algebraically closed.
\end{proof}

\section{Matrices on ordered quadratically closed fields.}

\begin{remdef}\label{norm-Frob}
Let $\mathbb{K}$ be an ordered quadratically closed field and $M \in M_n(\mathbb{K}) \setminus \{0\}$ be a semisimple matrix of splitting bound at most $2$ over $\mathbb{K}$. As remarked in \ref{parti-Re-Im}, after choosing $\sqrt{-1} \in \overline{\mathbb{K}}$, the decomposition 

$\lambda_i = \mathbf{Re}(\lambda_i) + \sqrt{-1}\,\mathbf{Im}(\lambda_i)$ is well-defined, being independent on the choice of a real closure of $\mathbb{K}$ into $\overline{\mathbb{K}}$.

Remembering the definitions of the matrices $\mathcal{A}_i(M)$ and $\mathcal{B}_{j}(M)$ in \ref{dec-fine-unica}, we denote:

$\mathbf{A}_i(M) = \mathcal{A}_i(M)$ for every $i= 1, \cdots , s$, \  and

$\mathbf{B}_j(M) = \dfrac{\mathcal{B}_j(M)}{\sqrt{\mathbf{Im}(\lambda_j)^2}}$ for every $j =1, \cdots , t$. 

Since $\mathbb{K}$ is an ordered quadratically closed field, the matrices $\mathbf{A}_i(M)$, $\mathbf{B}_j(M)$ have coefficients in $M_n(\mathbb{K})$ and are polynomial expressions of $M$. Moreover

$\mathbf{A}_i(M) \mathbf{A}_j(M) = \delta_{ij} \mathbf{A}_i(M)$ for every $i,j$; 

$\mathbf{A}_i(M) \mathbf{B}_j(M) = \mathbf{B}_j(M) \mathbf{A}_i(M) = 0$ for every $i,j$;

$\mathbf{B}_i(M) \mathbf{B}_j(M) =0$ for every $i \ne j$ and

$(\mathbf{B}_j(M))^3 = - \mathbf{B}_j(M)$ for every $j$.

Then by \ref{dec-fine-unica} we get
\begin{center}
$(*)  \ \ \ \ M =  \sum_{i=1}^s \gamma_i \, \mathbf{A}_i(M) - \sum_{j=1}^t \mathbf{Re}(\lambda_j) \, \mathbf{B}_j(M)^2 + \sum_{j=1}^t \sqrt{\mathbf{Im}(\lambda_j)^2} \, \mathbf{B}_j(M).
$
\end{center}
We call the above decomposition \emph{normalized fine Frobenius decomposition of} $M$ and the matrices $\mathbf{A}_i(M)$'s and $\mathbf{B}_j(M)$'s its \emph{normalized fine Frobenius covariants}.

Using \ref{dec-fine-unica} it is possible to prove that a normalized fine Frobenius decomposition exists only for semisimple nonzero matrices with splitting bound at most $2$ over the ordered quadratically closed field $\mathbb{K}$ and that such decomposition is uniquely determined up the order of the addends; analogously the normalized fine Frobenius covariants are uniquely determined from their properties, stated above, and from the previous decomposition.

As in \ref{oss-dopo-dec-fine-unica} (b) we can retrieve the possible eingenvalue $0$ by posing $\gamma_0=0$ and $\mathbf{A}_0(M) = I_n -\sum_{i=1}^s \mathbf{A}_i(M) + \sum_{j=1}^t \mathbf{B}_j(M)^2$ which allows to write the normalized fine Frobenius decomposition of $M$ as:
\begin{center}
$(*') \ \ \ \ M =  \sum_{i=0}^s \gamma_i \, \mathbf{A}_i(M) - \sum_{j=1}^t \mathbf{Re}(\lambda_j) \, \mathbf{B}_j(M)^2 + \sum_{j=1}^t \sqrt{\mathbf{Im}(\lambda_j)^2} \, \mathbf{B}_j(M).
$
\end{center}
\end{remdef}

\begin{prop}\label{**}
Assume that $(\mathbb{K}, |\cdot |)$ is an ordered quadratically closed valued field, that $f(X)$ is a series with coefficients in $\mathbb{K}$ and that  
$M \in \widehat{\Omega}_{f, \mathbb{K}} \setminus \{0\}$ be a semisimple matrix with splitting bound  at most $2$ over $\mathbb{K}$ and with normalized fine Frobenius decomposition:

$M =  \sum_{i=0}^s \gamma_i \, \mathbf{A}_i(M) - \sum_{j=1}^t \mathbf{Re}(\lambda_j) \, \mathbf{B}_j(M)^2 + \sum_{j=1}^t \mathbf{Im}(\lambda_j) \, \mathbf{B}_j(M)$,

where the $\lambda_j$'s are the eigenvalues of $M$ not in $\mathbb{K}$, having positive imaginary part (so $\mathbf{Im}(\lambda_j)= \sqrt{\mathbf{Im}(\lambda_j)^2}$).

Then
\begin{center}
$(**) \ \ \ 
f(M) = \sum_{i=0}^s f(\gamma_i) \mathbf{A}_i(M) - \sum_{j=1}^t  \mathbf{Re}f(\lambda_j) \, \mathbf{B}_j(M)^2  + \sum_{j=1}^t \mathbf{Im}f(\lambda_j) \mathbf{B}_j(M),
$
\end{center}

with $f(\gamma_i), \ \mathbf{Re}f(\lambda_j) , \ \mathbf{Im}f(\lambda_j) \ \in \mathbb{K}^c$ for every $i,j$.
\end{prop}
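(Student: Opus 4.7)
The plan is to deduce the statement from Proposition \ref{decf(M)} by translating its conclusion into the normalized-covariant language of \ref{norm-Frob}. Since $\mathbb{K}$ ordered quadratically closed forces $char(\mathbb{K})=0$, Proposition \ref{decf(M)} applies and writes $f(M)$ as
\[
\sum_{i=0}^s f(\gamma_i)\,\mathcal{A}_i(M) + \sum_{j=1}^t \Bigl[\dfrac{\mathcal{R}f(\lambda_j)}{V_\mathbb{K}(\lambda_j)^2}\,\mathcal{B}_j(M)^2 + \sqrt{-1}\,\dfrac{\mathcal{I}f(\lambda_j)}{V_\mathbb{K}(\lambda_j)}\,\mathcal{B}_j(M)\Bigr].
\]
I would then convert the $\mathcal{A}_i,\mathcal{B}_j$ to $\mathbf{A}_i,\mathbf{B}_j$ using that each $\lambda_j$ has degree $2$ over $\mathbb{K}$: by Proposition \ref{parti-Re-Im}(b), $H_\mathbb{K}(\lambda_j)=\mathbf{Re}(\lambda_j)$ and $V_\mathbb{K}(\lambda_j)=\sqrt{-1}\,\mathbf{Im}(\lambda_j)$, so $V_\mathbb{K}(\lambda_j)^2=-\mathbf{Im}(\lambda_j)^2$; combined with $\mathcal{A}_i(M)=\mathbf{A}_i(M)$ and $\mathcal{B}_j(M)=\mathbf{Im}(\lambda_j)\,\mathbf{B}_j(M)$ from \ref{norm-Frob} (which uses the convention $\mathbf{Im}(\lambda_j)>0$), the $j$-th bracket collapses cleanly to $-\mathcal{R}f(\lambda_j)\,\mathbf{B}_j(M)^2+\mathcal{I}f(\lambda_j)\,\mathbf{B}_j(M)$.

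The remaining, and essentially only substantive, step is to identify $\mathcal{R}f(\lambda_j)=\mathbf{Re}f(\lambda_j)$ and $\mathcal{I}f(\lambda_j)=\mathbf{Im}f(\lambda_j)$. For this I would expand $(\alpha_j+\beta_j)^m$ via the binomial theorem with $\alpha_j=\mathbf{Re}(\lambda_j)$ and $\beta_j=\sqrt{-1}\,\mathbf{Im}(\lambda_j)$, split each expansion into even and odd powers of $\beta_j$ (which will contribute the real and imaginary parts of $f(\lambda_j)$ respectively), and exploit $\beta_j^{2h}=(-1)^h\mathbf{Im}(\lambda_j)^{2h}$ and $\beta_j^{2h-1}=(-1)^{h-1}\sqrt{-1}\,\mathbf{Im}(\lambda_j)^{2h-1}$; a term-by-term comparison with the power series defining $\mathcal{R}f$ and $\mathcal{I}f$ in \ref{recall-valued-fields}(b) then yields the two identifications. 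Because $\mathbf{Re}(\lambda_j),\mathbf{Im}(\lambda_j)\in\mathbb{K}$, both series are $\mathbb{K}$-valued and thus converge in $\mathbb{K}^c$, confirming the ambient-field claim for the coefficients.

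The main obstacle is purely bookkeeping: one must track two interlocking sign patterns, one coming from $V_\mathbb{K}(\lambda_j)^2=-\mathbf{Im}(\lambda_j)^2$ and the other from the odd powers of $\sqrt{-1}$, and verify that every free occurrence of $\sqrt{-1}$ has cancelled by the end of the manipulation, so that the final formula is manifestly defined over $\mathbb{K}^c$. No conceptually new input beyond Proposition \ref{decf(M)}, Proposition \ref{parti-Re-Im}(b), and the definitions in \ref{norm-Frob} is required.
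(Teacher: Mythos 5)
Your proposal is correct and follows the same route as the paper, whose own proof of Proposition \ref{**} is the single line ``It follows directly from \ref{decf(M)}.'' You have simply made explicit the substitutions $\mathcal{A}_i(M)=\mathbf{A}_i(M)$, $\mathcal{B}_j(M)=\mathbf{Im}(\lambda_j)\,\mathbf{B}_j(M)$, $V_\mathbb{K}(\lambda_j)=\sqrt{-1}\,\mathbf{Im}(\lambda_j)$, and the resulting cancellation of $\sqrt{-1}$, which is exactly the bookkeeping the authors left to the reader.
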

\begin{proof}
It follows directly from  \ref{decf(M)}.
\end{proof}

\begin{examples}\label{exp-cos}
We go back to the examples in \ref{exp-cos'} under the same conditions over $\mathbb{K}$ of the previous Proposition.

If $M \in \widehat{\Omega}_{\mathbb{K}} \setminus \{0\}$ is semisimple with splitting bound at most $2$ over $\mathbb{K}$, then 

$\exp(M)=\sum_{i=0}^s \exp(\gamma_i) \mathbf{A}_i(M) - \sum_{j=1}^t \exp(\mathbf{Re}(\lambda_j)) \cos(\mathbf{Im}(\lambda_j) ) \mathbf{B}_j(M)^2 +$

$\ \ \ \ \ \ \ \ \ \ \ \ \ \ \ \ \ \ \ \ \ \ \ \ \ \ \ \ \ \ \ \ \ \ \ \ \ \ \ \ \  +\sum_{j=1}^t \exp(\mathbf{Re}(\lambda_j)) \sin(\mathbf{Im}(\lambda_j)) \mathbf{B}_j(M)$

and

$\cos(M)= \sum_{i=0}^s \cos(\gamma_i) \mathbf{A}_i(M) - \sum_{j=1}^t \cos(\mathbf{Re}(\lambda_j)) \cosh(\mathbf{Im}(\lambda_j)) \mathbf{B}_j(M)^2 +$

$\ \ \ \ \ \ \ \ \ \ \ \ \ \ \ \ \ \ \ \ \ \ \ \ \ \ \ \ \ \ \ \ \ \ \ \ \ \ \ \ \  - \sum_{j=1}^t \sin(\mathbf{Re}(\lambda_j)) \sinh(\mathbf{Im}(\lambda_j)) \mathbf{B}_j(M),$

where the $\lambda_j$'s are the eigenvalues of $M$ not in $\mathbb{K}$, having positive imaginary part.

The previous formula of $\exp(M)$ extends the classical Rodrigues' formula for the exponential of a real skew symmetric matrix (see for instance \cite{GaXu2002} Thm.2.2).
\end{examples}

\begin{prop}\label{real-closed-equiv} The following assertions are equivalent:

a) $\mathbb{K}$ is real closed;

b) $\mathbb{K}$ is not algebraically closed, $char(\mathbb{K}) \ne 2$ and the irreducible polynomials of $\mathbb{K}[X]$ have degree at most $2$;

c) $\mathbb{K}$ is a perfect field of characteristic different from $2$ and the group $Aut(\overline{\mathbb{K}}/\mathbb{K})$ has order $2$.

d) the $\mathbb{K}$-involution of $\overline{\mathbb{K}}$ is an element of $Aut(\overline{\mathbb{K}}/\mathbb{K})$ different from the identity;

e) $\mathcal{R}_\mathbb{K} = \overline{\mathbb{K}}$ and $Ker(H_\mathbb{K})$ is the $\mathbb{K}$-vector space generated by $\sqrt{-1}$;

f) $\mathbb{K}$ is not algebraically closed, $char(\mathbb{K}) \ne 2$ and every matrix $M \in M_n(\mathbb{K})$ has splitting bound al most $2$  over $\mathbb{K}$.
\end{prop}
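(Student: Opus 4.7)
I take (a) as a hub and verify that (a) implies each of (b)--(f) and that each of (b), (c), (d), (e), (f) implies (a). The main tools are the characterization recalled in \ref{real-closed} ($\mathbb{K}$ is real closed iff $\sqrt{-1}\notin\mathbb{K}$ and $\mathbb{K}(\sqrt{-1})=\overline{\mathbb{K}}$) together with the classical Artin--Schreier theorem: if $\overline{\mathbb{K}}/\mathbb{K}$ is a nontrivial finite extension, then $[\overline{\mathbb{K}}:\mathbb{K}]=2$ and $\mathbb{K}$ is real closed (in particular $char(\mathbb{K})=0$). Assuming (a) one has $char(\mathbb{K})=0$, $\mathbb{K}$ is perfect, and $\overline{\mathbb{K}}=\mathbb{K}(\sqrt{-1})$ is Galois of degree $2$; (b) and (c) follow at once. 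Writing each $z\in\overline{\mathbb{K}}$ uniquely as $x+y\sqrt{-1}$ with $x,y\in\mathbb{K}$, Definitions \ref{K-proj} give $H_\mathbb{K}(z)=x$ and $V_\mathbb{K}(z)=y\sqrt{-1}$; the $\mathbb{K}$-involution $z\mapsto x-y\sqrt{-1}$ is then precisely the nontrivial element of $Aut(\overline{\mathbb{K}}/\mathbb{K})$ (giving (d)) and $Ker(H_\mathbb{K})=\mathbb{K}\sqrt{-1}$ (giving (e)). For (f), every minimal polynomial in $\mathbb{K}[X]$ of a matrix factors into irreducibles of degree at most $2$ by (b).

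\textbf{Converses.} The implication (f)$\Rightarrow$(b) is immediate via companion matrices: a monic irreducible $p(X)\in\mathbb{K}[X]$ of degree $d$ is the minimal polynomial of its companion matrix $C_p\in M_d(\mathbb{K})$, which has splitting bound $d$; by (f), $d\le 2$. For (b)$\Rightarrow$(a), the hypothesis that $\mathbb{K}$ is not algebraically closed produces $\alpha\in\overline{\mathbb{K}}\setminus\mathbb{K}$, necessarily of degree $2$; completing the square (using $char\ne 2$) gives $\mathbb{K}(\alpha)=\mathbb{K}(\sqrt{d})$ for some $d\in\mathbb{K}$ with $\sqrt{d}\notin\mathbb{K}$. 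Any fourth root $\sqrt[4]{d}\in\overline{\mathbb{K}}$ has degree $1$ or $2$ by (b); degree $1$ would give $\sqrt{d}\in\mathbb{K}$, so the degree is $2$ and $\sqrt[4]{d}=u+v\sqrt{d}$ with $u,v\in\mathbb{K}$. Squaring and equating yields $u^2+v^2d=0$ and $2uv=1$, whence $-d=(u/v)^2$ is a square in $\mathbb{K}$ and $\sqrt{d}=\sqrt{-1}\sqrt{-d}\in\mathbb{K}(\sqrt{-1})$. Therefore every element of $\overline{\mathbb{K}}$ lies in $\mathbb{K}(\sqrt{-1})$, so $\overline{\mathbb{K}}=\mathbb{K}(\sqrt{-1})$; the inequality $\mathbb{K}\ne\overline{\mathbb{K}}$ forces $\sqrt{-1}\notin\mathbb{K}$, and \ref{real-closed} gives (a). Each of (c), (d)$\Rightarrow$(a) reduces to establishing $[\overline{\mathbb{K}}:\mathbb{K}]=2$, after which Artin--Schreier applies. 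In (c), $\mathbb{K}$ perfect makes $\overline{\mathbb{K}}/\mathbb{K}$ Galois, and Artin's lemma applied to the finite group $Aut(\overline{\mathbb{K}}/\mathbb{K})$ of order $2$ yields the degree. In (d), the involution is automatically defined on all of $\overline{\mathbb{K}}$ (so $\mathcal{R}_\mathbb{K}=\overline{\mathbb{K}}$) and has $char\ne 2$ by \ref{involuzione}; Theorem \ref{decomp-traccia} shows its fixed field equals $\mathbb{K}$ (an element $\alpha=H_\mathbb{K}(\alpha)+V_\mathbb{K}(\alpha)$ is fixed iff $V_\mathbb{K}(\alpha)=0$ iff $\alpha\in\mathbb{K}$), so Artin's lemma again gives $[\overline{\mathbb{K}}:\mathbb{K}]=2$. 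For (e), the direct-sum decomposition $\overline{\mathbb{K}}=\mathbb{K}\oplus\mathbb{K}\sqrt{-1}$ yields $\overline{\mathbb{K}}=\mathbb{K}(\sqrt{-1})$ with $\sqrt{-1}\notin\mathbb{K}$ (since $\sqrt{-1}\in Ker(H_\mathbb{K})\setminus\{0\}$ and $Ker(H_\mathbb{K})\cap\mathbb{K}=\{0\}$ by \ref{propr-D-K}(a)), and \ref{real-closed} concludes directly, with no need for Artin--Schreier.

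\textbf{Main obstacle.} The delicate step is the collapse argument in (b)$\Rightarrow$(a): extracting the identity $-d=(u/v)^2$ from a fourth root of $d$ to force every quadratic subextension of $\overline{\mathbb{K}}$ to coincide with $\mathbb{K}(\sqrt{-1})$. The other converses rely essentially on Artin--Schreier to pass from ``$[\overline{\mathbb{K}}:\mathbb{K}]=2$'' to ``$\mathbb{K}$ is real closed''; without it, one would still need to rule out $\sqrt{-1}\in\mathbb{K}$ by hand.
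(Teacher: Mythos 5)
Your proof is correct, and it shares the paper's hub‑and‑spoke organization around (a), but two of the spokes are driven by genuinely different engines. The most visible divergence is in (b)$\Rightarrow$(a): you complete the square, pass from a quadratic generator $\sqrt{d}$ to a fourth root $\sqrt[4]{d}$, write $\sqrt[4]{d}=u+v\sqrt{d}$, and extract $-d=(u/v)^2$, whereas the paper factors the quartic $X^4-\beta^2$ over $\mathbb{K}$ and reads $\sqrt{-\beta^2}\in\mathbb{K}$ off the constant term of a quadratic factor. These are two faces of the same elementary computation; your version is a bit more explicit and the paper's a bit more structural, but they buy the same thing.

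The more substantive difference is in the converses (c)$\Rightarrow$(a) and (d)$\Rightarrow$(a). You derive $[\overline{\mathbb{K}}:\mathbb{K}]=2$ (via Artin's lemma applied to the order‑$2$ group, or to the group generated by the involution whose fixed field Theorem \ref{decomp-traccia} identifies as $\mathbb{K}$) and then invoke the full Artin--Schreier theorem to conclude that $\mathbb{K}$ is real closed. The paper instead argues (c)$\Rightarrow$(b) and (d)$\Rightarrow$(b) directly --- in (c) by noting the order‑$2$ group acts transitively on each conjugacy class over the perfect field $\mathbb{K}$, so every irreducible polynomial has degree at most $2$; in (d) by computing $\overline{\beta^2}=(-\beta)^2=\beta^2$, hence $\beta^2\in\mathbb{K}$ and every $\lambda$ has degree at most $2$ --- and then reuses the already‑established (b)$\Rightarrow$(a). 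The paper's route is self‑contained, resting only on the quadratic‑closure characterization from \ref{real-closed}, while yours imports Artin--Schreier, a significantly stronger external result that in particular already contains the conclusion that $[\overline{\mathbb{K}}:\mathbb{K}]<\infty$ forces $char(\mathbb{K})=0$ and $[\overline{\mathbb{K}}:\mathbb{K}]=2$. Your version is shorter and more uniform; the paper's is more economical in its dependencies and funnels all nontrivial work through the single implication (b)$\Rightarrow$(a). Both are valid, and your treatments of (e)$\Rightarrow$(a) and (f)$\Leftrightarrow$(b) match the paper's.
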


\begin{proof}
For the equivalence between (a) and (b), first we remark that one implication follows from \ref{real-closed}.

For the converse it suffices to prove that $\overline{\mathbb{K}} = \mathbb{K}(\sqrt{-1})$, since $\mathbb{K}$ is not algebraically closed. Let $t \in \overline{\mathbb{K}} \setminus \mathbb{K}$, so it has degree $2$ and $t \in \mathcal{R}_\mathbb{K}$, since $char(\mathbb{K}) \ne 2$. By \ref{decomp-traccia}, we decompose  $t = \alpha + \beta$ as sum of an element $\alpha \in \mathbb{K}$ and of an element $\beta \in Ker(H_\mathbb{K}) \setminus \{0\}$. By \ref{car-non-2} (a) the conjugated of $t$ is it $\mathbb{K}$-involution $\overline{t} = \alpha - \beta$. This implies that the minimal polynomial of $t$ over $\mathbb{K}$ is $p(X) = X^2 - 2 \alpha X + \alpha^2 - \beta^2$, therefore the reduced form of $p(X)$ is $\tilde{p}(X) = X^2 - \beta^2$. Hence $\beta^2 \in \mathbb{K}$ while $\beta \notin \mathbb{K}$.

Now we consider the polynomial of $\mathbb{K}[X]$

$q(X)= X^4 - \beta^2 = (X - \sqrt{\beta}) (X + \sqrt{\beta}) (X - \sqrt{- \beta}) (X + \sqrt{-\beta})$ 

with its factorization in $\overline{\mathbb{K}}[X]$ (note that its roots are not in $\mathbb{K}$). Since $q(X)$ has degree $4$, it is reducible over $\mathbb{K}$, so it is product of two irreducible polynomials of $\mathbb{K}[X]$. Since $\beta \notin \mathbb{K}$, one of the two factors must have the form $(X- \sqrt{\beta}) (X \pm \sqrt{-\beta})$ and therefore $\sqrt{-\beta^2} \in \mathbb{K} \setminus \{0\}$. Hence $\beta = \pm \sqrt{-\beta^2} \sqrt{-1} \in \mathbb{K}(\sqrt{-1})$. This implies that $t = \alpha + \beta \in \mathbb{K}(\sqrt{-1})$, therefore $\overline{\mathbb{K}} \setminus \mathbb{K} \subseteq \mathbb{K}(\sqrt{-1})$ and so $\overline{\mathbb{K}} = \mathbb{K}(\sqrt{-1})$.

Assume the condition (c). $Aut(\overline{\mathbb{K}}/\mathbb{K})$ acts transitively on every conjugacy class over $\mathbb{K}$, so the irreducible polynomials in $\mathbb{K}[X]$ have degree at most $2$, because $\mathbb{K}$ is perfect. This gives that (c) implies (b).
On the other hand it is obvious that (a) implies (c).

Note that the assumptions in (d) imply that $\mathbb{K}$ is not algebraically closed, $char(\mathbb{K}) \ne 2$ and $\mathcal{R}_\mathbb{K} = \overline{\mathbb{K}}$. 

It is trivial that (a) implies (d).

Now assume (d). Let $\lambda = \alpha + \beta \in \overline{\mathbb{K}} = \mathcal{R}_\mathbb{K}$ with its $\mathbb{K}$-decomposition. In particular $\mathbb{K}(\lambda) = \mathbb{K}(\beta)$. From (d) we have: $\overline{\beta^2} = \overline{\beta}^2 = (-\beta)^2 = \beta^2$. Hence, by \ref{involuzione}, we get that $\beta^2 \in \mathbb{K}$ and so both $\beta$ and $\lambda$ have degree $2$ over $\mathbb{K}$. This gives (b).

Next we prove the equivalence between (a) and (e). Assume first (e). By \ref{car0sommadiretta},  $\mathcal{R}_\mathbb{K}  = \overline{\mathbb{K}} = \mathbb{K} \oplus Ker(H_\mathbb{K})$, so $\overline{\mathbb{K}}$ is the $\mathbb{K}$-vector space of dimension $2$ generated by $1$ and $\sqrt{-1}$, hence $\sqrt{-1} \notin \mathbb{K}$ and $\mathbb{K}(\sqrt{-1}) = \overline{\mathbb{K}}$ is algebraically closed.

For the converse, we have $char(\mathbb{K}) = 0$ (so $\mathcal{R}_\mathbb{K} = \overline{\mathbb{K}}$) and every element in $\overline{\mathbb{K}} \setminus \mathbb{K}$ is algebraic of order $2$ over $\mathbb{K}$. By \ref{car-non-2} (e), $\beta \in \overline{\mathbb{K}}$ belongs to $Ker(H_\mathbb{K})$ if and only if $\beta = 0$ or $\beta \notin \mathbb{K}$ and $\beta = \pm \sqrt{t}$ with $t \in \mathbb{K}$. In this last case $t$ is negative since $\mathbb{K}$ is real closed, hence $\beta = \pm \sqrt{-t} \, \sqrt{-1}$ with $\sqrt{-t} \in \mathbb{K} \setminus \{0\}$. So $\beta \in Ker(H_\mathbb{K})$ if and only if $\beta = k \sqrt{-1}$ with $k \in \mathbb{K}$.

Now (b) implies (f) by obvious reasons. For the converse it suffices to remember that every monic polynomial of degree $n$ in $\mathbb{K}[X]$ is the minimal polynomial of its \emph{companion matrix} which belongs to $M_n(\mathbb{K})$.
\end{proof}

\begin{rem}\label{caso-reale}
Assume that $\mathbb{K}$ is real closed and, as for an ordered quadratically closed field, choose one of the two roots of $X^2 +1$, denoted by $\sqrt{-1}$. 
For every $z \in \overline{\mathbb{K}}$ there are uniquely determined $a, b \in \mathbb{K}$ such that $z = a + \sqrt{-1} \, b$, as above denoted $a=\mathbf{Re}(z)$ and $b = \mathbf{Im}(z)$ (the \emph{real} and the \emph{imaginary} part of $z$).

Note that, in this case, the generator of $Aut(\overline{\mathbb{K}}/\mathbb{K})$ is necessarily the $\mathbb{K}$-involution.

We draw attention on point (f) of \ref{real-closed-equiv}: every $M \in  M_n(\mathbb{K})$ has splitting bound at most $2$ over $\mathbb{K}$. 
Therefore, if $\mathbb{K}$ is real closed, then every semisimple $M \in  M_n(\mathbb{K})\setminus \{0\}$ has a unique normalized fine Frobenius decomposition as in \ref{norm-Frob} $(*)$ and $(*')$.

Analogously if $(\mathbb{K}, |\cdot|)$ is a real closed valued field and $f(X)$ is a series with coefficients in $\mathbb{K}$, then for every semisimple matrix $M \in \widehat{\Omega}_{f, \mathbb{K}} \setminus \{0\}$, we have the formula $(**)$ as in \ref{**}. Similar formulas, as in \ref{exp-cos}, hold in case of $\exp(M)$ and $\cos(M)$, for every semisimple matrix
$M \in \widehat{\Omega}_{\mathbb{K}} \setminus \{0\}$. 
\end{rem}


\begin{thebibliography}{}

\bibitem[Dolcetti-Pertici 2017]{DoPe2017} DOLCETTI Alberto, PERTICI Donato, ``Some remarks on the Jordan-Chevalley decomposition'', https://arxiv.org/abs/1707.01794.

\bibitem[Gallier-Xu 2002]{GaXu2002} GALLIER Jean, XU Dianna, ``Computing exponential of skew-symmetric matrices and logarithms of orthogonal matrices'', \emph{International Journal of Robotics and Automation}, Vol. 17, No. 4, 10--20.

\bibitem[Garver 1927/28]{Gar1927} GARVER Raymond, ``The Tschirnhaus Transformation'', \emph{Ann. of Math.} (2)  29,  no. 1-4, 319--333. 

\bibitem[Horn-Johnson 1991]{HoJ1991} HORN Roger A.,  JOHNSON Charles R., \emph{Topics in matrix analysis}, Cambridge University Press, Cambridge.

\bibitem[Hungerford 1974]{Hun1974} HUNGERFORD Thomas W., \emph{Algebra}, GTM 73, Springer-Verlag, New York.

\bibitem[Jacobson 1989]{Jac1989} Jacobson Nathan, \emph{Basic Alcebra II}, Second Edition, H. W. Freeman and Company, New York. 

\bibitem[Lang 2002]{Lang2002} LANG Serge, \emph{Algebra}, GTM 211, Revised Third Edition, Springer-Verlag, New York.

\bibitem[Lorenz 2008]{Lor2008} LORENZ Falko, \emph{Algebra. Volume II: Fields with Structure, Algebras and Advanced Topics}, Universitext, Springer, New York

\bibitem[Rajwade 1993]{Raj1993} RAJWADE A. R., \emph{Squares}, London Mathematical Society Lecture Note Series 171, Cambridge University Press, Cambridge.

\bibitem[Schikhof 1984]{Schik1984} SCHIKHOF W. H., \emph{Ultrametric calculus. An introduction to p-adic analysis}, Cambridge Studies in advanced mathematics 4, Cambridge University Press, Cambridge.

\bibitem[Tignol 2001]{Tignol2001} TIGNOL Jean-Pierre, \emph{Galois' Theory of Algebraic Equations}, World Scienfific, Singapore.

\bibitem[Warner 1989]{War1989} WARNER Seth, \emph{Topological fields}, Mathematics Studies 157, North Holland, Amsterdam.

\bibitem[Yanai-Takeuchi-Takane 2011]{YTT2011} YANAI Haruo, TAKEUCHI Kei, TAKANE Yoshio, \emph{Projection Matrices, Generalized Inverse Matrices, and Singular Value Decomposition}, Springer, New York.
\end{thebibliography}
\end{document}